\newtheorem{theorem}{Theorem}[section]
\newtheorem{lemma}[theorem]{Lemma}
\newtheorem{condition}[theorem]{Condition}
\theoremstyle{definition}
\newtheorem{definition}[theorem]{Definition}
\newtheorem{example}[theorem]{Example}
\theoremstyle{remark}
\newtheorem{remark}[theorem]{Remark}
\newcommand{\R}{\mathbb{R}}
\renewcommand{\div}{{\rm div}}
\newcommand{\curl}{{\rm curl}}
\newcommand{\grad}{{\rm grad}}
\newcommand{\norm}[1]{{\left\|#1\right\|}}
\numberwithin{equation}{section}
\begin{document}

\title{Unique Measure for Time-Dependent Random Dynamical Systems}

\author{Gregory Varner}
\address{Division of Natural Sciences, Mathematics Department
John Brown University, Siloam Springs, AR 72761}
\email{gvarner@jbu.edu}

\subjclass{35Q30, 60H15, 60J05, 93C20, 35R01, 60J99}

\date{Received: date / Accepted: date}

\keywords{Navier-Stokes, unique measure, sphere, inhomogeneous Markov process}

\begin{abstract}
This paper proves the uniqueness of measure for the two-dimensional Navier-Stokes equations under a random kick-force and a time-dependent deterministic force. By extending a result for uniqueness of measure for time-homogeneous Markov processes to the time-inhomogeneous case, it is shown that the measures are exponentially mixing for the 2D Navier-Stokes equations on the sphere.
\end{abstract}

\maketitle

\section*{Introduction}

The existence and uniqueness of measure for the Navier-Stokes equations has been the subject of much recent research, with the main focus being unique time-invariant measure. A major advance was achieved in \cite{Kuksin2} where it was shown that, under a random bounded kick-type force, the Navier-Stokes system on the torus (bounded domains with smooth boundaries and periodic boundary conditions) has a unique time-invariant measure. In subsequent publications (see \cite{Kuksin,Kuksin3,Kuksin4,Shirikyan}) the argument was refined to a more flexible coupling approach, which has extended the argument to white noise case, time-periodic cases (see \cite{Shirikyan4,Varner2}) and the equations on the sphere (see \cite{Varner,Varner2}). Unfortunately, for true meteorological considerations it is also necessary to consider the equations under a fully time-dependent deterministic forcing. While this does, in general, negate the question of existence of time-independent measure, the question of uniqueness and the rate of convergence is still of interest.  

In this paper the previous results mentioned are extended to include time-dependent deterministic forces. In order to keep the presentation simple and to highlight similarities between the time-dependent case and previous work, we use a random perturbation activated by a dirac function as in \cite{Kuksin,Varner2} instead of a random perturbation activated by an indicator function as in \cite{Shirikyan4}. In addition, we include a more general case of a squeezing-type property of the deterministic equations similar to one used in \cite{Varner2}, which can allow for more general time-dependent deterministic forces.

The first section presents the main result of the paper, proving a theorem that extends the coupling argument in \cite{Kuksin7} to time-dependent forces. In particular, it is shown that for an inhomogeneous Markov process that has nonzero probability of coming arbitrarily close together in finite time (see Condition \ref{controllability}) and has a coupling between each time step that has a positive probability of being half the initial distance apart (see Condition \ref{thisisacouplinglemma}) then the associated probability measures are exponentially mixing (converge exponentially). Thus, regardless for any initial distribution, there is only one limiting measure. It should be mentioned that different but equivalent conditions are used to show uniqueness of measure in \cite{Shirikyan4}. The conditions chosen here are to highlight the dependence on the behavior of the underlying deterministic system. 

The second section presents an application of the main result to the Navier-Stokes equations on the sphere. A combination of the approaches in \cite{Brzezniak,Ilin,Ilin5,Titi} are used to first define the deterministic Navier-Stokes equations in the Navier-Stokes equations on the sphere, giving special attention a couple of specific examples of deterministic forcing that guarantee that Condition \ref{controllability} holds. The perturbed equations are then described and conditions are given that guarantee Condition \ref{controllability} without the necessity of a globally attracting solution are presented. Finally, necessary conditions for Condition \ref{thisisacouplinglemma} to hold are presented.


\section{The Main Theorem}

\subsection{Preliminaries}
Let $\left(u_{t},\mathbb{P}_{u}\right), \ t\geq 0$, be a Feller family of (time-inhomogeneous) Markov processes defined on a measurable space $\left(\Omega,\mathcal{F}\right)$ with range in $H$, a separable Hilbert space. Let $\beta\left(t,u,\Gamma\right)$ be the transition function associated with $\left(u_{t},\mathbb{P}_{u}\right)$,
$$P(t,u,\Gamma)=\mathbb{P}_{u}\left\{u_{t}\in\Gamma\right\}, \ u\in H, \Gamma\in \mathcal{B}(H).$$

Let $\beta_{t}$ and $\beta_{t}^{*}$ be the corresponding Markov operators
$$\beta_{t}f(u)=\int_{H}f(v)\beta(t,u,dv), \quad \beta_{t}^{*}\mu(\Gamma)=\int_{H}\beta(t,v,\Gamma)\mu(dv),$$
where $f\in C_{b}(X)$, $\mu\in \mathcal{P}(H)$, and $\Gamma\in \mathcal{B}(H)$. In order to examine time-inhomogeneous Markov processes, we will need the following operators which consider the behavior of the Markov process between two fixed times
$$\beta(t,l,u,\Gamma) = \mathbb{P}_{u}\left\{u_{t}\in\Gamma| u_{l}=u\right\}$$
and the corresponding Markov operators
$$\beta_{t,l}f(u)=\int_{H}f(v)\beta(t,l,u,dv), \quad \beta_{t,l}^{*}\mu(\Gamma)=\int_{H}\beta(t,l,u,\Gamma)\mu(du).$$

As mentioned, this operator considers the flow from time $l$ to time $t$ given the state at time $l$. Of course, if the Markov process if time-independent then $\beta(t,l,v,\Gamma)=\beta(t-l,v,\Gamma)$ since the behavior only depends on the time-elapsed.

Note that $\beta(t,l,u,\Gamma)$ satisfies the Chapman-Kolmogorov relation 
$$\beta(t,l,u,\Gamma)=\int_{X}\beta(s,l,z,\Gamma)\beta(t,s,u,dz)$$
which implies that for $l\leq s \leq t$
$$\beta_{t,l}f(v) = \beta_{t,s}\circ \beta_{s,l}f(v) \quad \text{and} \quad \beta_{t,l}^{*}\mu(\Gamma) = \beta_{t,s}^{*}\circ \beta_{s,l}^{*}\mu(\Gamma).$$
Thus both $\beta_{t,l}$ and $\beta_{t,l}^{*}$ form semi-groups in their respective spaces. Furthermore, this means that for any positive integer $t$
$$\beta_{t}^{*}\mu(\Gamma)=\beta_{t,0}^{*}\mu(\Gamma) = \beta_{t,t-1}^{*}\circ \beta_{t-1,t-2}^{*}\circ \cdots \beta_{1,0}^{*}\mu(\Gamma).$$

If the deterministic force is time-periodic or time-independent (and thus periodic) as considered in \cite{Kuksin,Shirikyan4,Varner2} then the semi-group property becomes $\beta_{t}=\beta_{1}\circ\beta_{1}\circ\cdots\circ \beta_{1}$.

\begin{example}\label{thisishownavierstokesworks}
Let $H$ be a separable Hilbert space, for each fixed $k\geq 1$ let $S_{t,t-1}:H\rightarrow H$ be a continuous mapping, and $\left\{\eta_{k}\right\}$ be a sequence of independent and identically distributed random variables in $H$ defined on a complete probability space $\left(\Omega,\mathcal{F},\mathbb{P}\right)$. Fixing an initial $v\in H$, consider the sequence of random variables given by the rule:
\begin{equation}
\begin{split}
	& u^{0}(v)= v \\
  & u^{k+1}(v)
  = S_{k+1,k}u^{k}(v)+\eta_{k+1}(x),  \ k=0,1,2,\ldots \\
  & u^{k+\tau}(v_{0}) = S_{k+\tau,k}u^{k}(v), 
   \ 0\leq\tau < 1, 
  \ k=0,1,2,\ldots
\end{split}
\end{equation}
In other words, in between the ``kicks'' the equations are governed by the (time-dependent) continuous mapping.

This defines a time-inhomogeneous discrete-time Markov process in $H$ (similar to the analogous formula in \cite{Kuksin7}, p. 24). Furthermore, notice that $\beta(k,k-1,v,\Gamma)$ is the probability that $S_{k,k-1}v+\eta_{k} \in \Gamma$. Thus it is the ``one-step'' Markov transition function for the process.
\end{example} 

For the statement of the main theorem, it will be necessary to recall the concept of a {\it coupling} of two measures. A pair of random variables $\left(\zeta_{1},\zeta_{2}\right)$ defined on a probability space $\Omega$ is called a coupling for given measures $\mu_{1}, \mu_{2}$ if the distribution $\mathcal{\zeta_{j}}=\mu_{j}$, $j=1,2.$

\subsection{The Main Result}

Let $\left(u_{t},\mathbb{P}_{u}\right), \ t\geq 0$, be a Feller family of (time-inhomogeneous) Markov processes defined on a measurable space $\left(\Omega,\mathcal{F}\right)$ with range in $H$, a separable Hilbert space with norm $\norm{\cdot}_{H}$. Let $\beta\left(t,u,\Gamma\right)$ and $\beta(t,t-1,u,\Gamma)$ be the transition functions associated with $\left(u_{t},\mathbb{P}_{u}\right)$ as described above. 

Suppose that the following two properties are satisfied:

\begin{condition}\label{controllability}
For any $d>0$ and $R>0$ there exists integer $l=l(d,R)>0$ and real number $x=x(d)>0$ such that
        \begin{equation}
 \mathbb{P}\left\{\|u^{l}(v)-u^{l}(w)\|_{H}\leq d\right\} \geq x, \ \text{for \ all} \ v,w\in B_{H}(R),
        \end{equation}
        where $B_{H}(R)$ is the ball of radius $R$ centered at 0 in $H$.
\end{condition}

\noindent Condition \ref{controllability} gives that there is a positive probability that the perturbed flow will becomes arbitrarily close together in finite time regardless of the initial conditions and, thus, provides a controllability assumption on the perturbed flow.

Denote by $X^{k}$ the direct product $X\times\cdots\times X$ endowed with the $\sigma$-algebra $\mathcal{B}^{k}(X)=\mathcal{B}(X)\times \cdots \times \mathcal{B}(X)$.

\begin{condition}\label{thisisacouplinglemma}
For any $R>0$ and for each fixed $t\geq 0$ there exists a constant $d>0$ such that for any points $u,u'\in B_{H}(R)$ with $\norm{u-u'}_{H}\leq d$ the measures $\beta(t,t-1,u_{1,2},\cdot)$ admit a coupling $V^{t,t-1}_{1,2}=V^{t,t-1}_{1,2}(u_{1},u_{2};\omega)$ that is measurable with respect to $\left(\vec{u}_{1},\vec{u}_{2},\omega\right)\in B_{H}(R)^{2}\times\Omega$ such that 
\begin{equation}
\mathbb{P}\left\{\norm{V^{t,t-1}_{1}-V^{t,t-1}_{2}}_{H}\geq \frac{d}{2}\right\}\leq Cd \nonumber
\end{equation}
where $C>0$ does not depend on $u,u',$ or $t$.
\end{condition}

Condition \ref{thisisacouplinglemma}, which in application follows from properties of the underlying deterministic system, will allow the proof of the main result to be reformed as a result about random variables in $H$ and says that there is a positive probability that the distance between the random variables is converging to zero. Furthermore, note that the condition only has significance if $Cd<1$. Due to this, define $d_{0}$ such that
$$Cd_{0}<\frac{1}{32}.$$

\begin{theorem}\label{themaintheoremhere}
Let $R>0$ and suppose that Conditions \ref{controllability} and \ref{thisisacouplinglemma} hold, then for any $u,v\in B_{H}(R)$ 
\begin{equation}
\norm{\beta(k,u,\cdot)-\beta(k,v,\cdot)}_{L}^{*}\leq C_{R}e^{-ck}, \quad k\geq 1,
\end{equation}
where $c>0$ is a constant not depending on $R$.
\end{theorem}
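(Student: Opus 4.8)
The plan is to establish exponential mixing in the dual-Lipschitz metric via a coupling construction that tracks pairs of trajectories and shows that, with overwhelming probability, they eventually lock together and stay close. The key quantity to control is $\norm{\beta(k,u,\cdot)-\beta(k,v,\cdot)}_L^*$, and the standard strategy (following \cite{Kuksin7}) is to build, on a single enlarged probability space, a coupling $(U_k, U_k')$ of the two processes started at $u$ and $v$ so that the expected distance $\mathbb{E}\,\norm{U_k - U_k'}_H$ decays geometrically; the dual-Lipschitz distance is then bounded by this expected distance (truncated by $1$), giving the exponential rate.

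First I would set up a renewal/recursion scheme over the integer time steps. Fix $R$ and the associated $d_0$ with $Cd_0 < 1/32$. The construction alternates between two regimes. In the \emph{far} regime, where $\norm{U_k - U_k'}_H > d_0$, I use Condition \ref{controllability}: starting from any two points in a suitable ball, after $l = l(d_0, R)$ steps there is probability at least $x = x(d_0) > 0$ that the trajectories come within $d_0$ of each other. In the \emph{near} regime, where $\norm{U_k - U_k'}_H \le d_0$, I invoke Condition \ref{thisisacouplinglemma} to build the one-step coupling $V^{t,t-1}_{1,2}$: with probability at least $1 - Cd$ the two coupled images are within half the current distance, i.e.\ the separation at least halves. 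Iterating the near regime forces geometric contraction of the distance toward $0$, while each visit to the far regime succeeds in re-entering the near regime after $l$ steps with fixed positive probability. The heart of the argument is a stopping-time analysis: I would define the sequence of times at which the pair either halves its distance (near-regime success) or fails and must re-attempt controllability, and show that the failure probabilities are summable geometrically.

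The technical engine is a lemma bounding $\mathbb{E}\,\norm{U_{k} - U_{k}'}_H$ in terms of $\mathbb{E}\,\norm{U_{k-1} - U_{k-1}'}_H$. In the near regime, using Condition \ref{thisisacouplinglemma}, I would estimate
\begin{equation}
\mathbb{E}\,\bigl[\norm{U_k - U_k'}_H \,\big|\, \mathcal{F}_{k-1}\bigr] \le \tfrac12 \norm{U_{k-1}-U_{k-1}'}_H + (\text{diam}) \cdot C\,\norm{U_{k-1}-U_{k-1}'}_H, \nonumber
\end{equation}
where the second term accounts for the bad event of probability at most $Cd$; the smallness condition $Cd_0 < 1/32$ is exactly what guarantees the effective contraction factor stays strictly below $1$. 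To keep the trajectories inside a fixed ball so that the constants $l$, $x$, and $C$ remain uniform, I would first need an a priori moment or boundedness estimate confining the coupled pair to $B_H(R')$ for some enlarged $R'$; this is where Condition \ref{controllability} is applied with the enlarged radius. Combining the geometric contraction in the near regime with the positive-probability returns from the far regime, I would conclude that there exist $C_R > 0$ and $c > 0$ with $\mathbb{E}\,\min(1, \norm{U_k - U_k'}_H) \le C_R e^{-ck}$, and hence the claimed dual-Lipschitz bound.

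The main obstacle I anticipate is the bookkeeping of the two-regime stopping-time structure in the time-\emph{inhomogeneous} setting: because the transition operators $\beta_{t,t-1}$ genuinely depend on $t$, I cannot simply iterate a single fixed kernel, and I must verify that the constants $l(d_0,R)$, $x(d_0)$, and $C$ furnished by the two Conditions are uniform in the absolute time $t$ so that the geometric estimates compound correctly across steps. The delicate point is ensuring that the coupling $V^{t,t-1}_{1,2}$ can be stitched together across consecutive integer steps into a single adapted coupling of the whole trajectory pair, with the bad events at different times controlled simultaneously; making the recursion close in expectation, rather than merely step by step, is what ultimately delivers the exponential rate with a constant $c$ independent of $R$.
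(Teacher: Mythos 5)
Your proposal follows essentially the same route as the paper: the paper constructs exactly this two-regime coupling (apply the one-step coupling $V^{t,t-1}_{1,2}$ from Condition \ref{thisisacouplinglemma} when the pair is within $d_{0}$, otherwise run the two processes independently and use Condition \ref{controllability} to re-enter the near regime after $l(d_{0})$ steps), proves the key lemma that from distance $\leq 2^{-r}d_{0}$ the separation halves at every step with probability $\geq 1-2^{-r-3}$, and then invokes the renewal bookkeeping of Kuksin--Piatnitski--Shirikyan for the exponential dual-Lipschitz bound. One caution: the paper closes the argument purely through these probability estimates (the role of $Cd_{0}<\frac{1}{32}$ is to make the per-step failure probabilities $\leq C2^{-k-r}d_{0}$ geometrically summable), not through your expectation recursion, which as written requires a finite diameter or an a priori boundedness estimate that the abstract hypotheses of Theorem \ref{themaintheoremhere} do not by themselves supply.
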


In particular, regardless of the initial measure (conditions), measures are exponentially mixing.

\subsection{Proof of the Main Theorem}

The structure of the proof of Theorem \ref{themaintheoremhere} is very similar to the arguments found in \cite{Kuksin} and \cite{Kuksin8} and contain many of the same elements as the analogous results for time-independent and time-periodic deterministic forces (c.f. Theorem 3.2.5 in \cite{Kuksin7}, Theorem 2.5 in \cite{Shirikyan4}, or Theorem 25 in \cite{Varner2}). However, despite the similarities in the method and structure of the proof, the application of the argument to time-inhomogeneous Markov chains is a significant conceptual departure from previous uses. 

The main portion of the proof follows from the behavior of the coupling. In particular, it follows from Condition \ref{thisisacouplinglemma} which gives that when the initial conditions are ``close enough" together, there is a positive probability that the distance between the coupling for the transition functions will be half the original distance. This idea is given more precisely in the following lemma.

Recall that $d_{0}$ is such that $Cd_{0}<\frac{1}{32}$ where $C$ is from Condition \ref{thisisacouplinglemma}.

\begin{lemma}
For any $R>0$ let $\vec{u}_{1,2}\in B_{H}(R)$ and $d=\norm{\vec{u}_{1}-\vec{u}_{2}}$. Then under the condition of Theorem \ref{themaintheoremhere}, there is a probability space $\left(\Omega,\mathcal{F},\mathbb{P}\right)$ such that for any integer $k\geq 1$ the measures $\mu_{\vec{u}_{1,2}}(k)$ admit a coupling $\vec{U}_{1,2}^{k}= \vec{U}_{1,2}^{k}(\vec{u}_{1},\vec{u}_{2},\omega^{k}), \ \omega^{k}\in\Omega^{k}$ such that
	\begin{enumerate}
		\item The maps $\vec{U}_{1,2}^{k}$ are measurable with respect to $\left(\vec{u}_{1},\vec{u}_{2},\omega^{k}\right)\in B_{H}(R)^{2}\times\Omega^{k}$.

		\item If $d=\norm{\vec{u}_{1}-\vec{u}_{2}}\leq 2^{-r}d_{0}$ then
			\begin{equation}
				\mathbb{P}^{k}\left\{\norm{\vec{U}_{1}^{k}-\vec{U}_{2}^{k}}\leq 2^{-k-r}d_{0}\right\}\geq 1-2^{-r-3}, \ k\geq 1, \ r\geq 0.    \label{singlecouplingestimate}
			\end{equation}
		\end{enumerate}
\end{lemma}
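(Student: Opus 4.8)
The plan is to construct the $k$-step coupling recursively in $k$, gluing the one-step coupling supplied by Condition \ref{thisisacouplinglemma} onto the coupling already built for the first $k-1$ steps, and then to control the probability in \eqref{singlecouplingestimate} by a union bound whose terms form a convergent geometric series. The governing principle is that each successful application of the one-step coupling halves the distance between the two copies, so that after $k$ successful steps the distance drops from $d\leq 2^{-r}d_{0}$ to at most $2^{-k-r}d_{0}$; the event in \eqref{singlecouplingestimate} fails only if at least one of the $k$ steps fails, and the probabilities of these failures decay geometrically because the distance, hence the bound $Cd$ of Condition \ref{thisisacouplinglemma}, is itself shrinking.

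First I would take the probability space to be the product $\Omega^{k}=\Omega\times\cdots\times\Omega$ with product measure $\mathbb{P}^{k}$, writing $\omega^{k}=(\omega_{1},\dots,\omega_{k})$ and $\mathcal{F}^{j}=\sigma(\omega_{1},\dots,\omega_{j})$. The base case $k=1$ is exactly Condition \ref{thisisacouplinglemma}: with $V^{1,0}_{1,2}$ the one-step coupling of $\beta(1,0,\vec u_{1,2},\cdot)$, the bound $\mathbb{P}\{\norm{V^{1,0}_{1}-V^{1,0}_{2}}\geq d/2\}\leq Cd$ together with $d\leq 2^{-r}d_{0}$ and $Cd_{0}<2^{-5}$ gives $\mathbb{P}\{\norm{V^{1,0}_{1}-V^{1,0}_{2}}\leq 2^{-1-r}d_{0}\}\geq 1-2^{-r-5}\geq 1-2^{-r-3}$. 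For the inductive step, given $\vec U^{k}_{1,2}$ on $\Omega^{k}$ with marginals $\beta(k,\vec u_{1,2},\cdot)$, I would define $\vec U^{k+1}_{1,2}$ on $\Omega^{k+1}$ by applying, conditionally on the realized values $(\vec U^{k}_{1},\vec U^{k}_{2})=(z_{1},z_{2})$ and using the fresh variable $\omega_{k+1}$, the coupling $V^{k+1,k}_{1,2}(z_{1},z_{2},\omega_{k+1})$ of $\beta(k+1,k,z_{1,2},\cdot)$ whenever $z_{1},z_{2}$ lie in the relevant ball with $\norm{z_{1}-z_{2}}$ small enough for Condition \ref{thisisacouplinglemma} to apply, and an arbitrary fixed measurable coupling (e.g. the independent one) otherwise. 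Since every branch is a coupling of $\beta(k+1,k,z_{1},\cdot)$ and $\beta(k+1,k,z_{2},\cdot)$, its $j$-th marginal equals $\int_{H}\beta(k+1,k,z,\cdot)\,\beta(k,\vec u_{j},dz)=\beta(k+1,\vec u_{j},\cdot)=\mu_{\vec u_{j}}(k+1)$ by the Chapman--Kolmogorov relation; measurability in $(\vec u_{1},\vec u_{2},\omega^{k+1})$ follows from the inductive measurability of $\vec U^{k}$, the measurability of $V^{k+1,k}$ granted by Condition \ref{thisisacouplinglemma}, and the measurability of the branching set, giving conclusion (1).

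For estimate (2) I would set $A_{j}=\bigcap_{i=1}^{j}\{\norm{\vec U^{i}_{1}-\vec U^{i}_{2}}\leq 2^{-i-r}d_{0}\}$ with $A_{0}=\Omega^{k}$, so that $A_{k}$ is contained in the event of \eqref{singlecouplingestimate} and the $A_{j}$ decrease, whence $A_{k}^{c}=\bigcup_{j=1}^{k}(A_{j-1}\setminus A_{j})$ is a disjoint union with $A_{j-1}\setminus A_{j}=A_{j-1}\cap\{\text{step }j\text{ fails}\}$. Conditioning on $\mathcal{F}^{j-1}$, on $A_{j-1}$ the two copies are within $2^{-(j-1)-r}d_{0}$, so Condition \ref{thisisacouplinglemma} applied to the step-$j$ coupling bounds the conditional failure probability by $C\norm{\vec U^{j-1}_{1}-\vec U^{j-1}_{2}}\leq C2^{-(j-1)-r}d_{0}<2^{-r-j-4}$, using $Cd_{0}<2^{-5}$. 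Summing the resulting series gives $\mathbb{P}^{k}(A_{k}^{c})\leq\sum_{j=1}^{k}2^{-r-j-4}<2^{-r-4}\leq 2^{-r-3}$, which is \eqref{singlecouplingestimate}.

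I expect the main obstacle to be the measurable gluing in the inductive step: producing, jointly measurably in $(z_{1},z_{2},\omega)$, a coupling of the one-step laws $\beta(k+1,k,z_{1,2},\cdot)$ that coincides with the coupling of Condition \ref{thisisacouplinglemma} on the ``close'' set and carries the prescribed marginals everywhere, and verifying that the conditional law of $\vec U^{k+1}$ given $\mathcal{F}^{k}$ is genuinely the one-step transition, so that the Chapman--Kolmogorov computation of the marginals is valid. A secondary point to monitor is that Condition \ref{thisisacouplinglemma} is only available for pairs in a fixed ball $B_{H}(R)$; one must therefore ensure, through the a priori dissipativity bounds of the underlying system, that both coupled trajectories remain in a ball on which the constant $C$ is valid, so that the per-step failure estimate can be invoked at every $j$.
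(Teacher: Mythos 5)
Your construction is essentially the paper's own proof: the paper likewise builds the $k$-step coupling inductively on the product space $(\Omega^{k},\mathcal{F}^{k})$ by gluing the one-step coupling $V^{k,k-1}_{1,2}$ of Condition \ref{thisisacouplinglemma} whenever the current states are within $d_{0}$ and letting the processes evolve freely otherwise, and then obtains \eqref{singlecouplingestimate} by exactly the geometric-series bound you carry out, with $d_{0}$ chosen so that $Cd_{0}<2^{-5}$ (the paper defers that calculation to Kuksin--Shirikyan, pp.\ 361--362, rather than reproducing it). Your explicit verification of the marginals via Chapman--Kolmogorov and of the conditional failure probabilities, as well as the caveat about both trajectories remaining in a ball on which $C$ is valid, are points the paper leaves implicit by citation, so your proposal is a correct and slightly more self-contained rendering of the same argument.
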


\begin{proof}
Since many of the calculations are identical to the proof of Lemma 3.3 in \cite{Kuksin}, we will highlight the differences by constructing the coupling operator. Recall that for $u_{1}, \ u_{2} \in B_{H}(R)$ for any fixed $k\geq 1$ a coupling $V^{k,k-1}_{1,2}(u_{1},u_{2};\omega)$ exists by assumption. For $j=1,2$ define
\begin{equation}
	 U^{k,k-1}_{j}(u_{1},u_{2};\omega):= 
			\left\{ \begin{array}{ll}
				V^{k,k-1}_{j}(u_{1},u_{2};\omega),& \mbox{if} \ \norm{u_{1}-u_{2}}\leq d_{0}\\    
				u^{k,k-1}(u_{j}), & \text{if} \ \norm{u_{1}-u_{2}}>d_{0}\\
				\end{array} \right. .
\end{equation}
In other words, if $\norm{u_{1}-u_{2}}\leq d_{0}$ then we proceed by using the coupling and if $\norm{u_{1}-u_{2}}> d_{0}$ we let the random process to continue. Of course, by Condition \ref{controllability} there is a finite time $l(d_{0})$ such that there is a positive time that the random process will be within $d_{0}$. 

 With this in mind, the random variables $U^{k}_{1,2}$ are defined inductively on $\left(\Omega^{k},\mathcal{F}^{k}\right)$ as follows:

\begin{eqnarray}
		&	U^{k}_{j}(u_{1},u_{2};\omega^{k}) :=   \\
		&	\left\{\begin{array}{ll}
			u^{k}(u_{j})
					& \mbox{if} \ \norm{u_{1}-u_{2}}>d_{0}, \\
					\ & \ k\leq l(d_{0}) \\
				U^{k,k-1}_{j}(U^{k-1}_{1}(u_{1},u_{2};\omega^{k-1}),U^{k-1}_{2}(u_{1},u_{2};\omega^{k-1}),
				\omega_{k}) & \text{if} \ \norm{u_{1}-u_{2}}>d_{0}, \\
				\ & \  k>l(d_{0}) \\
				U^{k,k-1}_{j}(U^{k-1}_{1}(u_{1},u_{2};\omega^{k-1}),U^{k-1}_{2}(u_{1},u_{2};\omega^{k-1}),
				\omega_{k}) & \text{if} \ \norm{u_{1}-u_{2}}\leq d_{0}
				\end{array} \right.   \nonumber\label{uk}
\end{eqnarray}
where
\begin{eqnarray}
	& \omega^{k}= \left(\omega^{k-1},\omega_{k}\right), \nonumber\\
	& \text{and} \\
	& U_{j}^{0}(u_{1},u_{2}) = u_{j}.\nonumber 
\end{eqnarray}

The remainder of the proof follows the calculation on pp. 361-362 in \cite{Kuksin} with the exception that our choice of $d_{0}$ is $2^{-2}$ times the one used there.
\end{proof}

The remainder of the proof of Theorem \ref{themaintheoremhere} follows the identical calculation for Theorem 1.1 in \cite{Kuksin8} as the argument only requires that the above lemma holds.

\section{The Kicked Navier-Stokes Equations on the Sphere}

In this section we consider the 2D Navier-Stokes equations on the sphere under a kick-force, examining under what conditions the system will satisfy the requirements of Theorem \ref{themaintheoremhere}. However, in order to do so, it will be necessary to describe the deterministic Navier-Stokes equations on the sphere.

\subsection{The Deterministic Equations on the Sphere}

Let $S^{2}$ be the 2-dimensional sphere with coordinates $\lambda$, $0\leq \lambda \leq 2\pi$ and $\phi$, $-\frac{\pi}{2}\leq \phi \leq \frac{\pi}{2}$ (the geographical latitude). The Navier-Stokes equations on the rotating sphere $S^{2}$ are \cite{Ilin5}

\begin{equation}\label{deterministicnavierstokes}
	\begin{split}
		&	\partial_{t}\vec{u}
			+
			\nabla_{\vec{u}}\vec{u}
			-
			\nu\Delta \vec{u}
			+
			l\,\vec{n}\times \vec{u}
			+
			\nabla \ p 
			=
			\vec{f},  \\
		& \div \ \vec{u} =0, 	\ \vec{u}|_{t=0}=\vec{u}_{0},
	\end{split}
\end{equation}
where $\vec{u}$ is the tangent velocity vector, $p$ is the pressure, $f$ is the forcing terms, $\vec{n}$ is the unit outward normal vector, $l=2\Omega\sin\phi$ is the Coriolis coefficient, $\Omega$ is the angular velocity of the Earth, and ``$\times$" is the standard cross product in $\R^{3}$.

The operators $\div$ and $\nabla$ in Equation \eqref{deterministicnavierstokes} have their conventional meanings on the sphere, i.e. for functions $\psi$ and vectors $\vec{u}$

\begin{equation}
	\nabla \psi = \frac{\partial \psi}{\partial \phi}\vec{\phi}+\left(\frac{1}{\cos\phi}\frac{\partial \psi}{\partial \lambda}\right)\vec{\lambda},
	\quad
	\div\vec{u} = \frac{1}{\cos\phi}\left(\frac{\partial}{\partial \lambda}u_{\lambda}+\frac{\partial}{\partial \phi}\left(u_{\phi}\cos\phi\right)\right), \nonumber
\end{equation}
\noindent where $\vec{u}=u_{\lambda}\vec{\lambda} +u_{\phi}\vec{\phi}$.

In order to define the covariant derivative $\nabla_{\vec{u}}$ and the vector Laplacian $\Delta$ the following definitions will be needed (\cite{Ilin5}, p. 984):

\begin{definition} For a tangent vector $\vec{u}$ and a normal vector $\vec{\psi}=\vec{n}\psi$ (identifying the vector with the function)
$$\curl\,\psi = -\vec{n}\times\nabla\psi, \quad \curl_{n}\vec{u}=-\vec{n}\div\left(\vec{n}\times \vec{u}\right).$$
\end{definition} 

With these definitions, we have that for a tangent vector $\vec{u}$ (\cite{Ilin5},p. 984)
\begin{eqnarray}
	\nabla_{\vec{u}}\vec{u} &:= \nabla\frac{\left|\vec{u}\right|^{2}}{2}
	-
	\vec{u}\times \curl_{n}\vec{u}, \\
	\Delta \vec{u} & := \nabla\,\div\, \vec{u} - \curl\,\curl_{n}\vec{u}.
\end{eqnarray}

It is worth noting that $\curl_{n}$ maps tangent vectors to normal vectors and $\curl$ maps normal vectors to tangent vectors. Furthermore, for a normal vector $\psi$ 
\begin{equation}
\curl_{n}\,\curl \psi = -\vec{n}\Delta \psi
\end{equation}
where $\Delta$ is the scalar spherical Laplacian.

\begin{remark} The above operators can be defined through extensions. In particular, for any covering $\left\{O_{i}\right\}$ of $S^{2}$ by open sets, there is a corresponding set of ``cylindrical domains" $\widetilde{O}_{i}$ that cover a tubular neighborhood of $S^{2}$, $\widetilde{S}^{2}$. In each $\widetilde{O}_{i}$ introduce the orthogonal coordinate system $\widetilde{x}_{1}, \ \widetilde{x}_{2}, \ \widetilde{x}_{3}$, where $-\epsilon<\widetilde{x}_{3}< \epsilon$ is along the normal to $S^{2}$ and for $\widetilde{x}_{3}=0$ the coordinates $x_{1},x_{2}$ agree with the spherical coordinates. Each of the operators area then defined as restrictions back onto the sphere (see \cite{Brzezniak,Ilin,Titi}).
\end{remark}

Now let $L^{p}(S^{2})$ denote the standard $L^{p}$-spaces of the square integrable scalar functions $\psi$ with mean value zero and tangent vector field $\vec{u}$ on $S^{2}$ with norms
 $$\norm{\psi}_{L^{2}}:= \int_{S^{2}}\psi^{2} dS^{2} $$
$$\norm{\vec{u}}_{L^{2}}:= \int_{S^{2}}\vec{u}\cdot \vec{u} dS^{2}.$$
	
\noindent Note that while these are integrals over oriented manifolds, locally $dS^{2}=\cos\phi d\phi d\lambda$. 

\noindent Let $\psi$ be a scalar function and $\vec{v}$ be a vector field on $S^{2}$. For $s\geq 0$, the standard Sobolev spaces $H^{s}$ have norm 
$$	\norm{\psi}^{2}_{H^{s}}:= \norm{\psi}^{2}_{L^{2}}
	+
	\left\langle -\Delta^{s} \psi, \psi\right\rangle_{L^{2}}$$
and

$$	\norm{\vec{u}}^{2}_{H^{s}}:= \norm{\vec{u}}^{2}_{L^{2}}
	+
	\left\langle -\Delta^{s} \vec{u}, \vec{u}\right\rangle_{L^{2}}.$$

\noindent By the Hodge Decomposition Theorem, the space of smooth vector fields on $S^{2}$ can be decomposed as (\cite{Ilin}, p. 564):
\begin{eqnarray}
		C^{\infty}(S^{2}) \nonumber \\
		= &\left\{\vec{u}:\vec{u}=\grad\phi, \phi\in C^{\infty}(S^{2})\right\}\oplus
									\left\{\vec{u}: \vec{u}=\curl\phi, \phi\in C^{\infty}(S^{2})\right\} \nonumber \\
	 = &\left\{\vec{u}: \vec{u}=\grad\phi, \phi\in C^{\infty}(S^{2})\right\}\oplus
									V_{0}. \nonumber						
\end{eqnarray}

\begin{definition}  
Let $H:= \curl(H^{1}(S^{2}))$ and $V:=\curl(H^{2}(S^{2}))$, which are closed subspaces of $L_{2}(S^{2})$ and $H^{1}(S^{2})$ respectively.
\end{definition}

Note that $H$ is the $L^{2}$ closure of $V_{0}$ and thus $\div\,\vec{u}=0$ for $\vec{u}\in H$ and $V$ is the $H^{1}$ closure of $V_{0}$ and thus $\div\,\vec{u}=0$ for $\vec{u}\in V$. Furthermore, $V$ is compactly embedded into $H$, and by the Poincare Inequality the $V$ norm is equivalent to the $H^{1}$ norm for divergence-free vector fields (see \cite{Ilin}, pp. 563-565).

Since the equations will be defined on spaces of divergence-free vector fields, the following definition will be useful.

\begin{definition} For a vector field $\vec{u}$, define the Laplacian on divergence-free vector fields as
	\begin{equation}
		A\vec{u}:= \curl\curl_{n}\vec{u}.
	\end{equation}
	Furthermore, if $\div\,\vec{u}=0$ then $A\vec{u}= -\Delta \vec{u}$. 
\end{definition}

Since the operator $A=\curl\curl_{n}$ is a self-adjoint positive-definite operator in $H$ it has eigenvalues $0< \lambda_{1}\leq \lambda_{2}\leq ... $ with the only accumulation point $\infty$ that correspond to an orthonormal basis in $H$ and an orthogonal basis in $V$. 

Let $P_{H}$ be the projection onto $H$. Since the projection commutes with $\partial_{t}$ and $A$, the projection of the Navier-Stokes equations onto $H$ is 
\begin{equation}
	\partial_{t}\vec{u}
	+
	\nu A\vec{u}
	+ 
	B(\vec{u},\vec{u}) 
	+
	C(\vec{u})
	= 
	\vec{f}, \quad \vec{u}|_{t=0}=\vec{u}_{0}, \label{projectednavierstokes}
\end{equation}
where $B(\vec{u},\vec{u})+C(\vec{u})=P_{H}(\nabla_{\vec{u}}\vec{u}+ l\,\vec{n}\times \vec{u})$. Furthermore, for all $\vec{v}\in V$
\begin{equation}
 	\left\langle B(\vec{u},\vec{u}) + C(\vec{u}),\vec{v}\right\rangle_{H} 
 	= 
 	b(\vec{u},\vec{u},\vec{v}) + \left\langle l\,\vec{n}\times \vec{u},\vec{v}\right\rangle_{H}, 
\end{equation}
where $b(\vec{u},\vec{u},\vec{v})= \left\langle -\vec{u}\times \curl_{n}\vec{u},\vec{v} \right\rangle$ and in general (\cite{Brzezniak}, p. 8)
$$b(\vec{u},\vec{v},\vec{w})=\frac{1}{2}\int_{S^{2}}\left(-\vec{u}\times\vec{v}\cdot \curl_{n}\vec{w}+\curl_{n}\vec{u}\times\vec{v}\cdot\vec{w}-\vec{u}\times\curl_{n}\vec{v}\cdot\vec{w}\right)dS^{2}.$$

\begin{remark}
$b(u,v,w)$ is the standard trilinear form associated with the Navier-Stokes equations, i.e. 
\begin{equation}
	b(\vec{u},\vec{v},\vec{w}) = \pi \sum_{i,j=1}^{3}\int_{M}u_{j}D_{i}v_{j}w_{j}dx,
\end{equation}
where $\pi$ is the orthogonal projection onto $S^{2}$ and the trilinear terms satisfies estimates analogous to those in the case of flat domains (\cite{Ilin}, pp. 561-566).
\end{remark}


We now state the existence and uniqueness of solutions to the deterministic Navier-Stokes equations in terms of the projected equations, as is standard. The proof is the same as the case of bounded domains with smooth boundaries and periodic boundary conditions (see \cite{Robinson}, pp. 245-254 and \cite{Ilin}, Theorem 2.2). 


\begin{theorem}\label{existenceofprojectedsolution} Suppose $\vec{f} \in  L^{2}(0,T; H)$ and $\vec{u}_{0}\in H$ then a solution of equation \eqref{projectednavierstokes} exists uniquely and $\vec{u}\in L^{2}(0,T;V)\cap C([0,T];H)$. If $\vec{u}_{0}\in V$ then the solution is strong, i.e. $\vec{u}\in L^{2}(0,T;D(A))\cap C([0,T];V)$ and $\dfrac{d\vec{u}}{dt} \in L^{2}(0,T; H)$. 
\end{theorem}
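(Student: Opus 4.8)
The statement is the standard existence–uniqueness theorem for the 2D Navier–Stokes equations, here transplanted to the sphere via the projected abstract evolution equation \eqref{projectednavierstokes}. The plan is to run the classical Galerkin approximation scheme against the orthonormal eigenbasis of $A=\curl\curl_{n}$, derive the a priori energy estimates that live in $L^{2}(0,T;V)\cap C([0,T];H)$, pass to the limit using compactness, and then obtain uniqueness from a Gronwall argument. Since the paper has already established that $A$ is self-adjoint and positive-definite with eigenvalues $0<\lambda_{1}\le\lambda_{2}\le\cdots$, that $V$ is compactly embedded in $H$, and that the trilinear form $b$ satisfies the same estimates as in the flat case, essentially every ingredient needed is in hand; the work is organizing them.

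First I would fix the eigenbasis $\{e_{j}\}$ of $A$ and set $H_{m}=\mathrm{span}\{e_{1},\dots,e_{m}\}$ with projection $P_{m}$, and solve the finite-dimensional ODE system $\partial_{t}\vec{u}_{m}+\nu A\vec{u}_{m}+P_{m}\!\left(B(\vec{u}_{m},\vec{u}_{m})+C(\vec{u}_{m})\right)=P_{m}\vec{f}$ with $\vec{u}_{m}(0)=P_{m}\vec{u}_{0}$; local existence is immediate from the smoothness of the nonlinearity, and global existence follows once the a priori bounds are in place. The basic estimate comes from testing against $\vec{u}_{m}$: the antisymmetry $b(\vec{u},\vec{v},\vec{v})=0$ kills the trilinear term, and the Coriolis term $\langle l\,\vec{n}\times\vec{u},\vec{u}\rangle$ vanishes because $\vec{n}\times\vec{u}$ is orthogonal to $\vec{u}$ pointwise. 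This yields the energy inequality controlling $\sup_{t}\norm{\vec{u}_{m}}_{H}^{2}$ and $\int_{0}^{T}\norm{\vec{u}_{m}}_{V}^{2}\,dt$ in terms of $\norm{\vec{u}_{0}}_{H}$ and $\norm{\vec{f}}_{L^{2}(0,T;H)}$, uniformly in $m$. A dual estimate on $\partial_{t}\vec{u}_{m}$ in $L^{2}(0,T;V')$ follows from the equation and the $b$-estimates.

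Next I would extract, by Banach–Alaoglu, a subsequence converging weakly-$*$ in $L^{\infty}(0,T;H)$, weakly in $L^{2}(0,T;V)$, with $\partial_{t}\vec{u}_{m}$ bounded in $L^{2}(0,T;V')$. The Aubin–Lions–Simon lemma, which applies precisely because $V\hookrightarrow H$ is compact, gives strong convergence in $L^{2}(0,T;H)$, and this strong convergence is exactly what is needed to pass to the limit in the nonlinear term $b(\vec{u}_{m},\vec{u}_{m},\vec{v})$. The limit $\vec{u}$ then solves \eqref{projectednavierstokes} weakly and, since it lies in $L^{2}(0,T;V)$ with time derivative in $L^{2}(0,T;V')$, the Lions–Magenes interpolation lemma places it in $C([0,T];H)$, matching the claimed regularity. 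For uniqueness, I would take two solutions, subtract, test the difference $\vec{w}$ against itself, and control the one surviving trilinear term $b(\vec{w},\vec{u},\vec{w})$ using a two-dimensional interpolation (Ladyzhenskaya-type) inequality $\norm{\vec{w}}_{L^{4}}^{2}\le C\norm{\vec{w}}_{H}\norm{\vec{w}}_{V}$; Young's inequality absorbs the $V$-factor into the viscous term and Gronwall finishes it. The strong-solution statement for $\vec{u}_{0}\in V$ is obtained by instead testing the Galerkin equations against $A\vec{u}_{m}$, giving uniform bounds on $\sup_{t}\norm{\vec{u}_{m}}_{V}^{2}$ and $\int_{0}^{T}\norm{A\vec{u}_{m}}_{H}^{2}\,dt$.

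The main obstacle is the nonlinear term: both in passing to the limit and in the uniqueness argument, everything hinges on the trilinear estimates for $b$ on the sphere behaving exactly as in the flat two-dimensional case, in particular the critical $L^{4}$–interpolation bound that makes the $2$D problem well-posed. The paper has asserted (in the remark following the definition of $b$, citing \cite{Brzezniak,Ilin}) that these estimates hold, so I would invoke them directly rather than reproving them; the remaining geometric subtlety is ensuring that the cancellation properties $b(\vec{u},\vec{v},\vec{v})=0$ and the orthogonality of the Coriolis term survive the covariant formulation on $S^{2}$, which follow from the divergence-free condition and the pointwise structure of $\vec{n}\times\vec{u}$.
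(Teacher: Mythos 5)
Your proposal is correct and coincides with the paper's own treatment: the paper gives no independent proof but defers to the standard Galerkin argument for 2D Navier--Stokes (citing \cite{Robinson}, pp.~245--254 and \cite{Ilin}, Theorem 2.2), which is exactly the scheme you outline --- eigenbasis Galerkin approximation, energy estimates using $b(\vec{u},\vec{v},\vec{v})=0$ and the pointwise orthogonality of the Coriolis term, Aubin--Lions compactness, Lions--Magenes continuity in $H$, Ladyzhenskaya-plus-Gronwall uniqueness, and the $A\vec{u}_{m}$ test for strong solutions. No gaps; your invocation of the paper's assertion that the trilinear estimates on $S^{2}$ match the flat case is precisely the justification the paper itself relies on.
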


Exponentially stable solutions are of special interest in the following section, where {\bf exponential stability} means that for any $R>0$, for all $t\geq t_{0}$, and for some $\alpha>0$
\begin{equation} \label{FForceCondition}
    \norm{S_{t}\vec{u}_{0}-S_{t}\vec{v}_{0}}_{H}
    \leq
    C(R)e^{-\alpha (t-t_{0})}
    \norm{\vec{u}_{0}-\vec{v}_{0}}_{H}
    \ \forall \ \vec{u}_{0},\vec{v}_{0}\in B_{H}(R)
\end{equation}
where $C(R)$ can depend on the norm of the force, $u(t_{0})=u_{0}$, and $S_{t}$ is the solution operator for the Navier-Stokes equations, i.e. $S_{t}u = u(t)$.

\begin{lemma}\label{whenhaveexponentialconvergence} The solution is exponentially stable if one of the following conditions holds:
\begin{enumerate}
\item 
If 
\begin{equation} 
    \norm{\vec{f}}_{L^{\infty}(0,\infty;H)} < \dfrac{\nu^{2}\sqrt{\lambda_{1}}}{k}. \label{thisishowsmall}
\end{equation}
\item
If the force yields a solution of the form $g(t)\curl \sin(\phi)\vec{n}$.
\item If the force is within $\delta$ of a force that yields a solution of the form $g(t)\curl \sin(\phi)\vec{n}$, for a particular choice of delta. (We call such a solution {\bf almost zonal}.)
\end{enumerate}
\end{lemma}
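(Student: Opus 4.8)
The plan is to treat all three cases with one energy estimate for the difference of two solutions and to separate the cases only by the way a single nonlinear term is controlled. Writing $\vec{u}=S_{t}\vec{u}_{0}$, $\vec{v}=S_{t}\vec{v}_{0}$ and $\vec{w}=\vec{u}-\vec{v}$, I would subtract the two copies of the projected equation \eqref{projectednavierstokes} and pair with $\vec{w}$ in $H$. Three simplifications occur immediately: $\left\langle A\vec{w},\vec{w}\right\rangle=\norm{\vec{w}}_{V}^{2}\geq\lambda_{1}\norm{\vec{w}}_{H}^{2}$ by the Poincar\'e inequality, the Coriolis term $\left\langle l\,\vec{n}\times\vec{w},\vec{w}\right\rangle$ vanishes because $\vec{n}\times\vec{w}\perp\vec{w}$ pointwise, and after the bilinear splitting $B(\vec{u},\vec{u})-B(\vec{v},\vec{v})=B(\vec{v},\vec{w})+B(\vec{w},\vec{v})+B(\vec{w},\vec{w})$ the antisymmetry $b(\vec{a},\vec{b},\vec{b})=0$ annihilates two of the three pieces. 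This leaves
\begin{equation}
\frac{1}{2}\frac{d}{dt}\norm{\vec{w}}_{H}^{2}+\nu\norm{\vec{w}}_{V}^{2}=-\,b(\vec{w},\vec{v},\vec{w}),\nonumber
\end{equation}
so the entire question reduces to weighing the single term $b(\vec{w},\vec{v},\vec{w})=\left\langle\nabla_{\vec{w}}\vec{v},\vec{w}\right\rangle$ against the dissipation.

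For part (1) I would use the smallness of the force to bound the base solution. The standard energy and enstrophy estimates for \eqref{projectednavierstokes}, using the two-dimensional cancellation in the nonlinearity together with $\norm{\vec{v}}_{V}^{2}\geq\lambda_{1}\norm{\vec{v}}_{H}^{2}$, furnish a uniform-in-time absorbing bound $\norm{\vec{v}}_{V}\leq M$ with $M$ controlled by $\norm{\vec{f}}_{L^{\infty}(0,\infty;H)}/\nu$ after a transient that I push into the constant $C(R)$. Estimating the remaining term by the two-dimensional Ladyzhenskaya-type inequality $\abs{b(\vec{w},\vec{v},\vec{w})}\leq c\,\norm{\vec{w}}_{H}\norm{\vec{w}}_{V}\norm{\vec{v}}_{V}$ and applying Young's inequality to absorb $\frac{\nu}{2}\norm{\vec{w}}_{V}^{2}$ on the left, I arrive at
\begin{equation}
\frac{d}{dt}\norm{\vec{w}}_{H}^{2}\leq\Big(-\nu\lambda_{1}+\frac{c^{2}}{\nu}M^{2}\Big)\norm{\vec{w}}_{H}^{2}.\nonumber
\end{equation}
Hypothesis \eqref{thisishowsmall} is exactly the requirement that the bracket be strictly negative, and Gronwall's inequality then delivers \eqref{FForceCondition} with $\alpha$ half the bracket; the constant $k$ is whatever combination of $c$ and the Sobolev constant on $S^{2}$ makes this balance close.

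Parts (2) and (3) exploit the geometry rather than the size of the base flow. Up to normalization $\curl\sin\phi$ is the velocity $\cos\phi\,\vec{\lambda}$ of a rigid rotation about the polar axis, hence a Killing field of $S^{2}$, and I would check that it is a first eigenfunction of $A=\curl\curl_{n}$. The decisive point is that a rigid rotation has vanishing rate-of-strain tensor, so that in $b(\vec{w},\vec{v},\vec{w})=\left\langle\nabla_{\vec{w}}\vec{v},\vec{w}\right\rangle$ the symmetric object built from $\vec{w}$ pairs only against the symmetric part of $\nabla\vec{v}$ and the term vanishes identically when $\vec{v}(t)=g(t)\,\curl\sin\phi$. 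The identity then collapses to $\frac{1}{2}\frac{d}{dt}\norm{\vec{w}}_{H}^{2}+\nu\norm{\vec{w}}_{V}^{2}=0$, giving unconditional exponential decay at rate $\nu\lambda_{1}$. For part (3) I would split $\vec{v}=\vec{v}_{0}+\vec{v}_{1}$ with $\vec{v}_{0}$ the zonal Killing part and $\norm{\vec{v}_{1}}_{V}=O(\delta)$; the first piece contributes nothing as above, while the residual $\abs{b(\vec{w},\vec{v}_{1},\vec{w})}\leq c\,\norm{\vec{w}}_{H}\norm{\vec{w}}_{V}\norm{\vec{v}_{1}}_{V}$ is absorbed exactly as in part (1), the admissible $\delta$ being the one that keeps the analogous bracket negative.

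The main obstacle is the clean vanishing of $b(\vec{w},\vec{v},\vec{w})$ in part (2): since the trilinear form is given in the curvilinear $\curl_{n}$ form rather than as a flat $\int w_{i}w_{j}\partial_{i}v_{j}$, I would need to confirm that the strain-free characterization of rigid rotations, and the eigenfunction computation for $A$, transfer correctly through the operators defined by extension, with the sign conventions for $\curl$ and $\curl_{n}$ respected. A secondary and purely bookkeeping difficulty is pinning down the exact constant $k$ in \eqref{thisishowsmall}, which needs the sharp Ladyzhenskaya inequality on $S^{2}$; for the uniqueness argument only the existence of some threshold matters.
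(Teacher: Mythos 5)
Your treatment of cases (1) and (2) is sound. For (1) you bound the base flow's enstrophy by its uniform-in-time absorbing bound (the paper's \eqref{Vcontraction}) rather than, as the paper does, by the time-averaged bound \eqref{integralinequality} inside the Gronwall exponent (which is what produces \eqref{continuityofNS} and Remark \ref{differenceofsolutionssatisfiescontraction}); the two routes give thresholds that differ only in how constants and powers of $\lambda_{1}$ are bundled into $k$, which is immaterial since \eqref{thisishowsmall} fixes no specific $k$. For (2) your route is genuinely different from the paper's: you identify $g(t)\curl\sin(\phi)\vec{n}$ with a multiple of the rigid-rotation (Killing) field $\cos\phi\,\vec{\lambda}$ and kill $b(\vec{w},\vec{v},\vec{w})$ pointwise because the quadratic form sees only the deformation tensor of $\vec{v}$, which vanishes for a Killing field; the paper instead works in stream-function form and proves the two cancellations \eqref{strongerthanskiba} and \eqref{Calc3} by Jacobian identities, the zonal one reducing to a $\partial_{\lambda}$-derivative that integrates to zero. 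Your argument is more conceptual and explains why the first spherical harmonic is special (it generates an isometry), while the paper's computation is precisely the verification you flag: that with its sign conventions for $\curl$ and $\curl_{n}$ the trilinear form is the covariant one. One shared piece of slack, not held against you: both you and the paper prove exponential attraction to the distinguished zonal solution, and \eqref{FForceCondition} for arbitrary pairs is then recovered only through the triangle inequality, losing the factor $\norm{\vec{u}_{0}-\vec{v}_{0}}_{H}$.

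Case (3), however, has a genuine gap: you posit the splitting $\vec{v}=\vec{v}_{0}+\vec{v}_{1}$ with $\norm{\vec{v}_{1}}_{V}=O(\delta)$ uniformly in time, but that bound is the heart of the case, not bookkeeping. Taking $\vec{v}_{0}$ to be the zonal solution of the $\vec{f}$-equation and $\vec{v}_{1}=\vec{v}-\vec{v}_{0}$, a generic continuous-dependence-on-the-force estimate only yields $\norm{\vec{v}_{1}(t)}\lesssim e^{Ct}\delta$, which is useless in a Gronwall bracket that must stay negative for all time. The paper closes this by a separate enstrophy estimate: pairing the equation for $\vec{v}_{1}$ with $A\vec{v}_{1}$ and invoking the zonal identities \eqref{bproperty}, \eqref{strongerthanskiba3}, \eqref{Calc4}, and \eqref{Skiba-estimate} to obtain $\partial_{t}\norm{\vec{v}_{1}}_{H^{1}}^{2}+\nu\norm{\vec{v}_{1}}_{H^{2}}^{2}\leq C\norm{\vec{f}-\vec{g}}_{H}^{2}$ (equation \eqref{forwhatscoming}), hence $\norm{\vec{v}_{1}(t)}_{H^{1}}^{2}\leq Ce^{-\lambda_{1}\nu t}+C\norm{\vec{f}-\vec{g}}_{L^{\infty}(0,\infty;H)}^{2}$, which is the uniform $O(\delta)$ you need. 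Note that these $H^{1}$-level cancellations pair the zonal field against $A\vec{v}_{1}$, so the two slots of the form carry different fields and your pointwise symmetrization argument (which needs the same field in both slots) does not apply; $\mathrm{Def}(\vec{v}_{0})=0$ alone does not deliver them, and a separate argument in the spirit of \eqref{strongerthanskiba3}--\eqref{Calc4} is required before your absorption ``exactly as in part (1)'' can start.
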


Since the proof of Lemma \ref{whenhaveexponentialconvergence} is technical in nature, the proof is reserved for the appendix.

\begin{remark} Due to \cite{Heywood}, p.19, if the deterministic force is time-periodic (or time-independent), then the existence of an exponentially stable solution guarantees that the solution will have the same period as the force. 
\end{remark}


\subsection{The Perturbed Equations on the Sphere}

We now turn to the Navier-Stokes equations on the sphere perturbed by a random kick-type force. The perturbations considered are an external forcing on the system activated by a Dirac function and in between the perturbations the system is governed by the deterministic equations. In particular, consider the Navier-Stokes system with forcing $\vec{f}\in L^{\infty}\left(0,\infty;H\right)$ and a random kick-force $\vec{g}$ bounded in $H$:
\begin{equation} \label{kicked}
\begin{split}
	&\partial_{t}\vec{u}
	+
	\nu A\vec{u}
	+
	B(\vec{u},\vec{u})
	+
	C(\vec{u})
	=
	\vec{f}+ \vec{g},\\    
	&\vec{g} = 
	\sum_{k=1}^{\infty}
	\eta_{k}(x)\delta_{kT}(t),
	 \ \vec{\eta}_{k}\in H, 
	 \  \norm{\vec{\eta}_{k}}_{H}< \infty \ \forall k.
\end{split}
\end{equation}

Furthermore, as in \cite{Kuksin}, pp. 356-357, we assume the kick-force satisfies:

\begin{condition} \label{mainkicksassumption}
Let $\left\{\vec{e}_{j}\right\}$ be the orthonormal basis for the Hilbert space $H$, 
then 
		\begin{equation} 
		 	\vec{\eta}_{k} = \sum_{j=1}^{\infty}b_{j}\zeta_{jk}\vec{e}_{j}, \ b_{j}\geq 0, \ B_{0}=
		 	 \sum_{j=1}^{\infty}b_{j}^{2}<\infty, 
		\end{equation}
for $\left\{\zeta_{jk}\right\}$ a family of independent, identically distributed real-valued variables, with $\left|\zeta_{jk}\right|\leq 1$ for all $j,k$. 
Their common law has density $p_{j}$ with respect to Lebesgue measure where $p_{j}$ is of bounded variation with support in the interval $\left[-1,1\right]$.
Furthermore, for any $\epsilon>0$, $\displaystyle \int_{\left|r\right|<\epsilon}p_{j}(r)dr>0.$  
\end{condition}

\noindent The notation from now on will be:
\begin{itemize}
	\item $S_{t,s}\vec{v}_{0}$ is the solution of the deterministic equation with initial condition at time $t=s$ of $\vec{v}_{0}\in H$ at time $t\geq s$.
	\item $S_{t,0}\vec{v}_{0}=S_{t}\vec{v}_{0}$.
	\item For simplicity of notation take the time between kicks as $T=1$. 
	\item $\vec{u}^{t}(\vec{v}_{0})$ is the solution of \eqref{kicked} with initial condition $\vec{v}_{0}$ at time $t\geq 0$. 
\end{itemize}

Then, the perturbed system is described by
\begin{equation}
\begin{split}
	& u^{0}(v)= v \\
  & u^{k+1}(v)
  = S_{k+1,k}u^{k}(v)+\eta_{k+1}(x),  \ k=0,1,2,\ldots \\
  & u^{k+\tau}(v_{0}) = S_{k+\tau,k}u^{k}(v), 
   \ 0\leq\tau < 1, 
  \ k=0,1,2,\ldots
\end{split}
\end{equation}
It is worth noting that if all of the ``kicks'' were the zero realization, then $S_{n}\vec{v}_{0}= \vec{u}^{n}(\vec{v}_{0}).$

Since the perturbed system is of the same form as in Example \ref{thisishownavierstokesworks}, it remains to examine when Conditions \ref{controllability} and \ref{thisisacouplinglemma} hold. While it is fairly straightforward that the existence of an exponentially stable solution is sufficient for Condition \ref{controllability}, it also guarantees that any two deterministic solutions with different initial conditions will become arbitrarily close together as $t\rightarrow \infty$. Similarly, any point that acts locally like an asymptotically stable solution will be a (local) contraction of the flow and needs to be considered. Since it will be sufficient for the random perturbations to be finite-dimensional (see Theorem \ref{THEMAINTHEOREM}), it will suffice for the solution to be locally stable in a finite number of dimensions. It should be pointed out that the following definition also captures the same concept as determining modes (\cite{Robinson}, p. 363).

\begin{definition}\label{finitelystable} Let $D(f)$ be the radius of the deterministic absorbing set (see \cite{Ilin}, p. 572) and $P_{M}$ be the projection onto the first $M$ eigenfunctions. A point $\vec{u}\in B_{H}(D(f))$ is called {\bf finitely stable} if for some $M\geq 1$, for some $\delta>0$, for any $\epsilon>0$ and for all $\vec{v}\in B_{H}(D(f)+\epsilon)$ satisfying $\norm{P_{M}\vec{u}-P_{M}\vec{v}}_{H}\leq \delta$, 
\begin{equation}
	\norm{S_{t}\vec{u}-S_{t}(\vec{v})}_{H}\rightarrow 0. 
\end{equation}
In other words, if the finite-dimensional projections are ``close enough,'' then the solutions converge.
\end{definition}

Though the assumption of a finitely stable point allows for the possibility of multiple solutions, it also will require additional assumptions for the structure of the perturbations.

\begin{definition} The following is called the {\bf big kick assumption.} 
Let $M$ be as in Definition \ref{finitelystable}. For some $N\geq M$ let the $b_{j}$ from Condition \ref{mainkicksassumption} satisfy
\begin{align}
	& b_{1}\geq 2D(f),\nonumber \\
	& b_{j}\geq \dfrac{2D}{\lambda_{j}^{1/2}} \ \text{for} \ 2\leq j \leq M, \\ 
	& b_{j}>0 \ \text{for} \ M<j\leq N . \nonumber
\end{align}
where $D=D(f)$ is the same as in \eqref{Condition1b} and $\lambda_{j}$ is the eigenvalue corresponding to $\vec{e}_{j}(x)$.
\end{definition}

By the Poincare inequality and standard estimates for the $H^{1}$ norm (c.f. Equation 4.8 in \cite{Ilin}) the $b_{j}$ are assumed to be twice as large as $\norm{Q_{j-1}\vec{u}(t)}_{H}$ if the initial condition is zero (where $Q_{n}=I-P_{n}$). In particular, if the stochastic flow is within $\delta$ of the ball of radius $D(f)$ then the kicks are large enough to ``kick'' the first $M$-dimensions of the flow within $\delta$ of the first $M$ dimensions of any point, thus a finitely stable point, in the deterministic absorbing ball with nonzero probability. In other words, the big kick assumption allows the perturbation to ``kick" the flow from anywhere in the absorbing ball into the stability radius of a finitely stable point.

\begin{theorem} \label{THEMAINTHEOREM}
Let the kicks satisfy Condition \eqref{mainkicksassumption}, let $\vec{f}\in L^{\infty}(0,\infty;H)$, and that either:
\begin{itemize}
	\item there exists at least one finitely-stable point and the
big kick assumption holds or 
	\item there is an exponentially stable solution.
\end{itemize}
Then there is an $N$ such that if
$b_{j}>0$ for $j=1,2,...,N$ then Conditions \ref{controllability} and \ref{thisisacouplinglemma} hold.
\end{theorem}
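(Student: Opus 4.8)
The plan is to verify that the kicked system of Example \ref{thisishownavierstokesworks} satisfies Conditions \ref{controllability} and \ref{thisisacouplinglemma}; once both hold, Theorem \ref{themaintheoremhere} applies verbatim and yields the exponential mixing asserted here. Throughout I would exploit that $\vec f\in L^{\infty}(0,\infty;H)$ together with the dissipativity of \eqref{projectednavierstokes} produces a \emph{time-uniform} absorbing ball: there is a radius $D=D(f)$ such that every deterministic trajectory $S_{t,s}\vec v_{0}$ enters $B_{H}(D)$ in a time depending only on the starting radius, and, since the kicks are bounded in $H$ by $B_{0}^{1/2}$, the perturbed trajectories are eventually trapped in a slightly larger ball $B_{H}(D+B_{0}^{1/2})$ uniformly in the kick realization. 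Hence it suffices to establish both conditions for initial data in this fixed ball, the crucial feature being that every constant can be taken independent of the time index $t$, which is exactly what the time-inhomogeneous formulation demands.

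\textbf{Condition \ref{thisisacouplinglemma} (the coupling).} For fixed $t$ and $u_{1},u_{2}\in B_{H}(R)$ with $\norm{u_{1}-u_{2}}_{H}\le d$, the one-step transition sends $u_{j}\mapsto V^{t,t-1}_{j}=S_{t,t-1}u_{j}+\eta_{t}$. I would split $H=P_{N}H\oplus Q_{N}H$ and build the coupling by using the \emph{same} realization of the high-mode kick $Q_{N}\eta_{t}$ for both copies while coupling the low-mode kicks $P_{N}\eta_{t}$ maximally. On the high modes this gives $Q_{N}(V^{t,t-1}_{1}-V^{t,t-1}_{2})=Q_{N}(S_{t,t-1}u_{1}-S_{t,t-1}u_{2})$, whose norm is controlled by the high-mode smoothing of the difference flow and can be driven below $d/2$ by choosing $N$ large, uniformly in $t$. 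On the low modes $P_{N}\eta_{t}$ has a density on $\R^{N}$ of bounded variation (here $b_{j}>0$ for $j\le N$ is used), so its shift by the fixed vector $a=P_{N}(S_{t,t-1}u_{1}-S_{t,t-1}u_{2})$ admits a maximal coupling agreeing on an event of probability at least $1-C\norm{a}_{H}$, the bounded variation yielding the linear dependence on $\norm{a}_{H}$. Since $\norm{a}_{H}\le C\norm{u_{1}-u_{2}}_{H}\le Cd$, on that event $P_{N}V^{t,t-1}_{1}=P_{N}V^{t,t-1}_{2}$ and hence $\norm{V^{t,t-1}_{1}-V^{t,t-1}_{2}}_{H}<d/2$, while its complement has probability $\le Cd$. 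This is precisely the required estimate, with $C$ depending only on $D$ and the total variation of the densities, hence independent of $t,u_{1},u_{2}$.

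\textbf{Condition \ref{controllability} (controllability).} Having absorbed both trajectories into $B_{H}(D+\epsilon)$, I treat the two hypotheses separately. If an exponentially stable solution exists, \eqref{FForceCondition} gives $\norm{S_{l}v-S_{l}w}_{H}\le C(R)e^{-\alpha l}\norm{v-w}_{H}\le d/3$ for $l=l(d,R)$ large; I then show the perturbed flow stays within $d/3$ of the deterministic one over these $l$ steps on the event that every kick is small in $H$. That event has positive probability by a tail-splitting argument: given $\delta$, pick $N'$ with $\sum_{j>N'}b_{j}^{2}<\delta^{2}/2$ (possible since $B_{0}<\infty$) and require $\abs{\zeta_{jk}}<\epsilon$ only for $j\le N'$, an event of positive probability because $\int_{\abs{r}<\epsilon}p_{j}(r)\,dr>0$ and only finitely many coordinates are constrained. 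For the finitely stable alternative I use the big kick assumption to steer, with positive probability, the projection $P_{M}$ of both trajectories into the $\delta$-neighborhood of $P_{M}\vec u$ for a finitely stable point $\vec u$; Definition \ref{finitelystable} then forces $\norm{S_{t}\vec u-S_{t}(\cdot)}_{H}\to 0$ for both copies, so they come within $d$ of each other after an additional deterministic time.

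The main obstacle is twofold. The cleaner issue is the uniformity in $t$ of every constant, which the time-inhomogeneous setting makes essential; this is exactly what the uniform-in-time absorbing ball, the $t$-independent high-mode contraction, and the bounded-variation bounds are arranged to supply. The genuinely delicate point is the positive-probability steering in the finitely stable case: one must show that the big kick assumption indeed places $P_{M}$ of the flow into the stability radius of $\vec u$ with positive probability, reconciling the reach demanded by $\vec u$ with the fact that Condition \ref{mainkicksassumption} only guarantees positive density near the origin. I expect this to require combining the deterministic drift over one step with the admissible kick support so that the required coordinate values fall inside windows on which the densities are positive.
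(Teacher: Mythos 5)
Your proposal takes essentially the same route as the paper: the paper establishes Condition \ref{thisisacouplinglemma} via the low-mode maximal coupling / high-mode squeezing construction of Lemma 3.2 in \cite{Kuksin} (your $P_{N}$/$Q_{N}$ splitting with bounded-variation densities), and Condition \ref{controllability} by the same dichotomy of a positive-probability small-kicks event in the exponentially stable case versus big-kick steering into the stability radius of a finitely stable point, with the uniform-in-$t$ constants supplied by Condition \ref{generalcontraction}. Even the delicate point you flag at the end --- that the prescribed steering kicks must lie in the support of the kick distribution --- is precisely what the paper addresses by invoking Lemma 5.4 of \cite{Kuksin2}.
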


The proof of Theorem \ref{THEMAINTHEOREM} is based on the following two lemmas, which follow from showing that the 2D Navier-Stokes equations on the sphere satisfy the following conditions. These follow from standard estimates of the Navier-Stokes equations (included in the Appendix for completion). 

\begin{condition} \label{generalcontraction}
For any $R$ and $r$ with $R> r>0$ there exist $C= C(R,f),$ $D=D(f)$, $a= a(R,r)< 1$ all positive and there exists an integer $n_{0}=n_{0}(R,r)\geq 1$ such that
         \begin{align}
             & \norm{S_{n}\vec{u}_{0}}_{H} \leq \max\left\{a\norm{\vec{u}_{0}}_{H}+ D,r + D\right\}, \ \vec{u}_{0}\in B_{H}(R), \ \forall n\geq n_{0} \label{Condition1b}, \\
             & \norm{S_{t,t-1}\vec{u}_{0}-S_{t,t-1}\vec{v}_{0}}_{H} \leq C \norm{\vec{u}_{0}-\vec{v}_{0}}_{H},
              \ \forall t\in\left[1,\infty\right), \ \vec{u}_{0},\vec{v}_{0}\in B_{H}(R); \label{Condition1a}
         \end{align}
where $\norm{\vec{\eta}_{k}}^{2}\leq B_{0}$ for all $k$.

Furthermore, there is a decreasing sequence $\gamma_{N}(R,f)>0, \ \gamma_{N}\rightarrow 0$ as $N\rightarrow \infty$ such that for all $t\in\left[1,\infty\right), \ \vec{u}, \vec{v}\in B_{H}(R)$
        \begin{equation}
            \norm{(I-P_{N})(S_{t,t-1}\vec{u}_{0}-S_{t,t-1}\vec{v}_{0})}_{H} \leq \gamma_{N}(R,f)\norm{\vec{u}_{0}-\vec{v}_{0}}_{H}, \label{Condition2}
        \end{equation}
        where $P_{n}$ is the projection onto the first $N$ eigenfunctions $\vec{e}_{j}$.
\end{condition}

\begin{lemma}\label{firstlemmaweneed}
Under the assumptions of Theorem \ref{THEMAINTHEOREM} for any $R>0$ and for each fixed $t\geq 0$ there exists a constant $d>0$ such that for any points $u,u'\in B_{H}(R)$ with $\norm{u-u'}_{H}\leq d$ the measures $\beta(t,t-1,u_{1,2},\cdot)$ admit a coupling $V^{t,t-1}_{1,2}=V^{t,t-1}_{1,2}(u_{1},u_{2};\omega)$ that is measurable with respect to $\left(\vec{u}_{1},\vec{u}_{2},\omega\right)\in B_{H}(R)^{2}\times\Omega$ such that 
\begin{equation}
\mathbb{P}\left\{\norm{V^{t,t-1}_{1}-V^{t,t-1}_{2}}_{H}\geq \frac{d}{2}\right\}\leq Cd \nonumber
\end{equation}
where $C>0$ does not depend on $u,u',$ or $t$.
\end{lemma}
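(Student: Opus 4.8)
The plan is to construct the coupling $V^{t,t-1}_{1,2}$ directly from the one-kick structure of the process and then verify the estimate using Condition \ref{generalcontraction}, exactly as in the standard kicked-force argument but keeping careful track of the explicit time indices $t,t-1$. Recall that the one-step transition $\beta(t,t-1,u,\cdot)$ is the law of $S_{t,t-1}u+\vec{\eta}_{t}$, where $\vec{\eta}_{t}$ has the product structure from Condition \ref{mainkicksassumption}. Thus a coupling of $\beta(t,t-1,u_1,\cdot)$ and $\beta(t,t-1,u_2,\cdot)$ amounts to coupling the two shifted kick-laws, i.e.\ the laws of $S_{t,t-1}u_1+\vec{\eta}$ and $S_{t,t-1}u_2+\vec{\eta}'$. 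First I would split the kick into its low modes $P_N\vec{\eta}$ and high modes $(I-P_N)\vec{\eta}$, with $N$ chosen as in Theorem \ref{THEMAINTHEOREM}. On the high modes I would simply use the \emph{identical} noise for both copies (the trivial/diagonal coupling), so that the high-mode discrepancy is governed purely by the deterministic contraction estimate \eqref{Condition2}. On the finite-dimensional low modes, where the two shifted densities $P_N(S_{t,t-1}u_j)+P_N\vec{\eta}$ are mutually absolutely continuous (this is where the densities $p_j$ of bounded variation and the $b_j>0$ for $j\le N$ are used), I would use the maximal coupling of the two projected measures.

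The key steps, in order, are as follows. (1) Use \eqref{Condition1a} to bound the separation of the drift images: $\norm{S_{t,t-1}u_1-S_{t,t-1}u_2}_H\le C\norm{u_1-u_2}_H\le Cd$, uniformly in $t\in[1,\infty)$ and uniformly over $u_1,u_2\in B_H(R)$. This is precisely why the constant $C$ in the conclusion can be taken independent of $t$: the deterministic one-step Lipschitz bound is time-uniform. (2) For the low modes, estimate the total-variation distance between the two $N$-dimensional shifted kick-densities by the $L^1$-distance of a density against its translate, which for densities of bounded variation is controlled linearly by the size of the shift; since the shift is $P_N(S_{t,t-1}u_1-S_{t,t-1}u_2)$, of norm $\le Cd$, the maximal coupling agrees with probability at least $1-C'd$. (3) On the event that the low-mode maximal coupling succeeds and using the shared high-mode noise, bound $\norm{V^{t,t-1}_1-V^{t,t-1}_2}_H$ by the high-mode deterministic discrepancy $\gamma_N(R,f)\norm{u_1-u_2}_H\le \gamma_N d$ from \eqref{Condition2}. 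Choosing $N$ large enough that $\gamma_N<\tfrac12$ (possible since $\gamma_N\to0$), this last quantity is strictly below $d/2$, so the only way to have $\norm{V^{t,t-1}_1-V^{t,t-1}_2}_H\ge d/2$ is for the low-mode coupling to fail, an event of probability at most $C'd$. Setting $C:=C'$ gives the claim.

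The main obstacle I expect is making the low-mode total-variation estimate genuinely uniform in the time index $t$ and in the pair $(u_1,u_2)\in B_H(R)^2$, and checking that the maximal coupling can be realized measurably in $(\vec{u}_1,\vec{u}_2,\omega)$ as required by the statement. The uniformity hinges on the fact that the bound in step (2) depends on $(u_1,u_2)$ and $t$ only through the shift magnitude $\le Cd$ and through the fixed bounded-variation norms of the densities $p_j$ (which are independent of $t$), so no time dependence enters the final constant; this is the genuinely new point relative to the time-homogeneous case in \cite{Kuksin}, since here the deterministic operator $S_{t,t-1}$ varies with $t$, and the whole argument works only because \eqref{Condition1a} and \eqref{Condition2} are assumed to hold with $t$-independent constants. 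The measurability of the maximal coupling is standard (it depends continuously on the two measures, which in turn depend continuously on $(u_1,u_2)$ via the Feller property), so I would only sketch it. Everything else is the routine computation from pp.\ 356--362 of \cite{Kuksin}, adapted to the notation $\beta(t,t-1,\cdot,\cdot)$.
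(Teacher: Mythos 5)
Your construction---identical noise on the high modes, maximal coupling of the shifted bounded-variation kick densities on the low modes $P_N$, with \eqref{Condition1a} giving a $t$-uniform bound $\norm{S_{t,t-1}u_1-S_{t,t-1}u_2}_H\le Cd$ on the shift and \eqref{Condition2} forcing the high-mode discrepancy $\gamma_N d$ below $d/2$---is precisely the proof of Lemma 3.2 in \cite{Kuksin} that the paper's own argument invokes, and your emphasis on the time-uniformity of the constants in \eqref{Condition1a} and \eqref{Condition2} is exactly the point the paper flags as the only modification needed for the time-inhomogeneous setting. So the proposal is correct and takes essentially the same approach as the paper, merely written out in full where the paper defers to the reference.
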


\begin{proof}
By equation \eqref{Condition1a}, $S_{t,t-1}:H\rightarrow H$ is a Lipschitz operator for any fixed $t \in \left[1,\infty\right)$, with Lipschitz constant depending only on the one time unit elapsed. Since equation \eqref{Condition2} is also satisfied for any fixed $t \in \left[1,\infty\right)$, the proof of Lemma \ref{firstlemmaweneed} follows identical to the proof of Lemma 3.2 in \cite{Kuksin}, which uses equations \eqref{Condition1a} and \eqref{Condition2}, with the exception that constants now depend on the norm of $f$. 
\end{proof}

The next lemma requires knowing that any sequence of realization of kicks can be taken, with positive probability, arbitrarily close to any other prescribed sequence of vectors in $supp D(\vec{\eta})$ (see \cite{Kuksin2}, Lemma 5.4). Furthermore, since the proof of the following lemma requires properties of the long term behavior of the deterministic system and the kicks but not the time-dependence of the force, the proof is identical to that of Lemmas 6 and 7 in \cite{Varner2} and thus only a brief sketch is given here.

\begin{lemma}
Under the assumptions of Theorem \ref{THEMAINTHEOREM} for any $d>0$ and $R>0$ there exists integer $l=l(d,R)>0$ and real number $x=x(d)>0$ such that
        \begin{equation}
 \mathbb{P}\left\{\|u^{l}(v)-u^{l}(w)\|_{H}\leq d\right\} \geq x, \ \text{for \ all} \ v,w\in B_{H}(R),
        \end{equation}
        where $B_{H}(R)$ is the ball of radius $R$ centered at 0 in $H$.
\end{lemma}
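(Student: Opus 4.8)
The plan is to verify Condition \ref{controllability} by exhibiting, for each pair $v,w\in B_H(R)$, a positive-probability event on which the two kicked trajectories are funneled into the basin of a single reference solution and then tracked to within $d$ of each other. I first note that in the controllability setting the two copies $u^l(v)$ and $u^l(w)$ evolve under \emph{independent} realizations of the kick sequence (this is the ``let the random process continue'' branch of the coupling construction), so the kicks steering the two trajectories may be prescribed separately and the resulting probabilities multiply. The two structural tools are: the dissipativity bound \eqref{Condition1b} together with $\norm{\vec\eta_k}^2\le B_0$, which guarantees that for any bounded kicks both trajectories enter a fixed absorbing ball $B_H(D(f)+r)$ after a deterministic number of steps $n_0=n_0(R)$; and the fact (\cite{Kuksin2}, Lemma 5.4) that, by Condition \ref{mainkicksassumption}, any finite prescribed kick sequence whose values lie in the support of the common law can be approximated to within any $\epsilon$ by the actual kicks with strictly positive probability, using $\int_{|r|<\epsilon}p_j(r)\,dr>0$.

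I would first treat the exponentially stable alternative. Choose the prescribed kicks on $[0,l]$ to be near zero for each copy; the event ``all kicks up to time $l$ have norm $\le\epsilon$'' then has positive probability for each copy, hence positive probability $x>0$ for the independent pair. On this event each kicked trajectory tracks its deterministic counterpart: writing $e^v_k=u^k(v)-S_k v$ and using the one-step Lipschitz bound \eqref{Condition1a} with constant $\Lambda$, one gets $e^v_0=0$ and $\norm{e^v_{k+1}}\le\Lambda\norm{e^v_k}+\epsilon$, so $\norm{e^v_l}\le\epsilon(\Lambda^l-1)/(\Lambda-1)$, which is $\le d/3$ once $\epsilon$ is small (with $l$ already fixed). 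Global exponential stability \eqref{FForceCondition} on the absorbing ball gives $\norm{S_l v-S_l w}\le C(R)e^{-\alpha l}\,2R\le d/3$ for $l$ large. The triangle inequality then yields $\norm{u^l(v)-u^l(w)}\le d$ on the event, proving the claim.

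Next I would treat the finitely stable alternative, where there is no global attractor and one must actively steer toward the finitely stable point $\vec u_{\ast}$. After the first $n_0$ steps both trajectories lie in $B_H(D(f)+r)$. The big kick assumption makes the first $M$ modes of the kick cover a region in $H$ whose radius is at least of order $2D$, which contains the required offset $P_M\vec u_{\ast}-P_M S_{n_0+1,n_0}u^{n_0}(v)$ (of norm at most roughly $2D$); hence with positive probability the kick at step $n_0+1$ places $\norm{P_M u^{n_0+1}(v)-P_M\vec u_{\ast}}_H\le\delta$ while keeping $u^{n_0+1}(v)\in B_H(D(f)+\epsilon)$, and independently the same for the $w$-copy. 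By Definition \ref{finitelystable} both post-kick states then lie in the basin of $\vec u_{\ast}$, so their deterministic flows converge to $S_t\vec u_{\ast}$. Imposing near-zero kicks after step $n_0+1$ (again positive probability) makes both kicked trajectories track these converging deterministic flows, and for $l$ large both land within $d/2$ of $S_l\vec u_{\ast}$, hence within $d$ of each other. Multiplying the finitely many positive probabilities gives $x>0$.

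The hard part will be the \emph{uniformity} of the terminal time $l$ in the finitely stable case. Exponential stability supplies an explicit, initial-condition-independent rate, so there $l$ is immediate; but Definition \ref{finitelystable} only asserts $\norm{S_t v-S_t\vec u_{\ast}}_H\to0$ pointwise, with no uniform rate over the admissible post-kick set $\{v\in B_H(D(f)+\epsilon):\norm{P_M v-P_M\vec u_{\ast}}_H\le\delta\}$. To extract a single $l$ forcing both copies within $d/2$ simultaneously I would invoke compactness: the absorbing ball is bounded in $H$ and the solution operator regularizes into $V$, which embeds compactly in $H$, so the admissible set is precompact and the pointwise convergence upgrades to uniform convergence on it by a Dini/equicontinuity argument using continuous dependence on the data. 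Controlling the small-kick deviation uniformly over this same set is routine from \eqref{Condition1a}. I expect this uniform-convergence step to be the only genuinely delicate point; the remaining estimates are the standard Navier--Stokes bounds recorded in Condition \ref{generalcontraction} and collected in the appendix, which is why the argument runs parallel to Lemmas 6 and 7 of \cite{Varner2}.
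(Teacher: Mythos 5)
Your overall strategy coincides with the paper's (which only sketches the argument and defers to Lemmas 6 and 7 of \cite{Varner2}): in the exponentially stable case, prescribe near-zero kicks, let the kicked trajectories track their deterministic counterparts, and combine \eqref{FForceCondition} with the triangle inequality; in the finitely stable case, let dissipation bring both copies into the absorbing ball, use the big kick assumption to place each copy's first $M$ modes within $\delta$ of those of the finitely stable point, then prescribe near-zero kicks and track. Your insistence on \emph{independent} kick realizations for the two copies is also the right reading of the framework, and is in fact forced in the second case: with a shared kick sequence the kicks cancel in the difference $u^{k}(v)-u^{k}(w)$, and a shared kick translates both states by the same vector, so it can never place two states whose $P_{M}$-projections are more than $2\delta$ apart into the basin simultaneously. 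The paper is silent on this point, so making it explicit is a genuine clarification rather than a deviation. (One smaller remark: in your exponentially stable branch the small-kick event spans all $l(d,R)$ steps, so your probability $x$ depends on $R$, whereas Condition \ref{controllability} asks for $x=x(d)$; this is repaired by the same two-phase structure you already use in the other branch, namely first enter a fixed absorbing ball with arbitrary kicks and only then prescribe small kicks.)

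The genuine gap is exactly the step you flag as delicate, and your proposed repair does not close it. The precompactness claim itself is fine: a bounded set in $V$ plus the set of kick values $\left\{\sum_{j}b_{j}\xi_{j}\vec{e}_{j}:\left|\xi_{j}\right|\leq 1\right\}$ (compact in $H$ since $\sum_{j}b_{j}^{2}<\infty$) has precompact sum in $H$. But compactness alone does not upgrade the pointwise convergence $\norm{S_{t}z-S_{t}\vec{u}_{*}}_{H}\rightarrow 0$ to uniform convergence. Dini's theorem requires $t\mapsto\norm{S_{t}z-S_{t}\vec{u}_{*}}_{H}$ to be monotone, which no estimate in the paper provides; the equicontinuity route requires the family $\left\{S_{t}\right\}_{t\geq 0}$ to be equicontinuous \emph{uniformly in} $t$, which fails because the Lipschitz constant in \eqref{continuityofNS} grows exponentially in $t$. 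What a finite subcover actually yields is: for each post-kick state $z$ there is \emph{some} time $T_{i}$ from a finite list at which $\norm{S_{T_{i}}z-S_{T_{i}}\vec{u}_{*}}_{H}$ is small. To pass to the single common time $\max_{i}T_{i}$ you would need a ``once close, stays close'' property, i.e.\ that the points $S_{T_{i}}\vec{u}_{*}$ along the reference trajectory are themselves finitely stable for the time-shifted flow; Definition \ref{finitelystable} asserts this only for the single point $\vec{u}_{*}$ at the initial time. So, as written, the finitely stable case does not produce the uniform $l(d,R)$ and $x(d)$ that Condition \ref{controllability} demands. To be fair, the paper's own sketch (``by the definition of a finitely stable point, there is a time such that the two solutions are within $\delta/2$'') has exactly the same lacuna — that time depends on $v$, $w$, and the realized kicks, and no uniformity is established — so you have located a real weakness; but closing it requires either strengthening Definition \ref{finitelystable} to demand convergence uniform over the admissible set, or an argument genuinely different from the compactness upgrade you propose.
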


\begin{proof}
First suppose that all kicks are the zero realization. Then one of the following happen:
\begin{itemize}
	\item the system is exponentially stable and thus there is a time such that any two solutions become within $\delta/2$ or
	\item the system has a finitely stable point and there is a time such that any solution is within $d/2$ of the absorbing ball of radius $D(f)$, where $d$ is the stability radius of the finitely stable point.
\end{itemize}
The proof now proceeds as follows:
\begin{itemize}
	\item there is a positive probability that if all the kicks have size $\leq \gamma$ with $\gamma>0$ then the continuity of the system (equation \eqref{Condition1a}) implies the result.
	\item there exists a ``large kick" that can send the solution within $d/2$ of the finitely stable point. By the definition of a finitely stable point, there is a time such that the two solutions are within $\delta/2$. By continuity there is a $\gamma>0$ such that if the kicks are within $\gamma$ to the above prescribed sequence, then the result holds.
\end{itemize}
\end{proof}

Thus Theorem \ref{THEMAINTHEOREM} holds and Theorem \ref{themaintheoremhere} gives that there is an unique measure that any other initial distribution will converge to exponentially.


\section{Conclusion}

While there is uniqueness of measure for the kicked Navier-Stokes equations with a bounded time-dependent deterministic force, the requirements on the forces are rather restrictive. While zonal forces are applicable and important to meteorology, the forces are still restricted to the specific case of forcing along the first spherical harmonic. While the extension to the almost zonal solution does extend the results to more realistic atmospheric flows, the forces are still very restrictive in that they can only be a ``small'' distance from a zonal forcing in the first spherical harmonic. Thus, it would appear that the most applicable case to real atmospheric flows would be the finitely stable point. However, this requires a minimum size of kicks that are potentially unrealistic. Even though the probability of such a large kick occurring can be arbitrarily small by choice of the structure of the random variable, it must be nonzero and thus the ``large kick assumption'' is also problematic for meteorological considerations. 

It is possible, however, that the kicks may be allowed to be smaller. The big kick assumption is to ensure that a kick can, with positive probability, send the flow into the neighborhood of any point in the deterministic absorbing ball. The reason for the big kick assumption comes from the deterministic setting where a dirac measure at any stationary solution is a time-invariant measure, giving non uniqueness if there are multiple stationary solutions. Thus, for example, if there are two stable stationary solutions the kicks must be (at minimum) large enough to send the flow from inside the radius of stability of one into the radius of stability of the other. The big kick assumption is sufficient to do this, but a smaller, and thus more physically relevant, kick may suffice.

Of course, the majority of the results presented in this paper apply to the Navier-Stokes equations on the torus. While the results concerning the zonal solutions no longer hold, if the force yields an unique asymptotically stable solution or a finitely stable point then there is again unique measure for the time-dependent equations.

\section{Appendix}


\subsection{Estimates}

We now present estimates that will be needed to establish Condition \ref{generalcontraction} and Lemma \ref{whenhaveexponentialconvergence}. Since the majority of the estimates are well-known and analogous to standard estimates on flate domains, proof will only be provided for less standard results (cf. \cite{Constantin1, Ilin, Ilin2, Kuksin7}).

\begin{lemma} For $\vec{u}\in V$ the Poincare Inequality holds, i.e.
\begin{equation}
	\norm{\vec{u}}_{V}^{2}=
	\|A^{1/2}\vec{u}\|_{H}^{2} 
	\geq \lambda_{1}\norm{\vec{u}}_{H}^{2} \label{Poincare}
\end{equation}
where $\lambda_{1}$ is the first eigenvalue of the Laplacian.
In particular, the $V$-norm is equivalent to the $H^{1}$-norm on $V$.
\end{lemma}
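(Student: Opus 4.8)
The plan is to prove the inequality directly from the spectral decomposition of $A = \curl\curl_{n}$. As recalled above, $A$ is self-adjoint and positive-definite on $H$ with eigenvalues $0 < \lambda_{1} \leq \lambda_{2} \leq \cdots$ accumulating only at $\infty$ and a corresponding orthonormal eigenbasis $\left\{\vec{e}_{j}\right\}$ of $H$. First I would fix $\vec{u} \in V$ and expand it as $\vec{u} = \sum_{j} c_{j}\vec{e}_{j}$ with $c_{j} = \langle \vec{u},\vec{e}_{j}\rangle_{H}$, so that Parseval gives $\norm{\vec{u}}_{H}^{2} = \sum_{j} c_{j}^{2}$. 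Defining $A^{1/2}$ through the functional calculus, i.e. $A^{1/2}\vec{e}_{j} = \lambda_{j}^{1/2}\vec{e}_{j}$, one has $\norm{A^{1/2}\vec{u}}_{H}^{2} = \langle A\vec{u},\vec{u}\rangle_{H} = \sum_{j}\lambda_{j}c_{j}^{2}$, the series converging precisely because $\vec{u} \in V = D(A^{1/2})$.

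The inequality \eqref{Poincare} then follows at once: since $\lambda_{j} \geq \lambda_{1}$ for every $j$, we have
$$\norm{A^{1/2}\vec{u}}_{H}^{2} = \sum_{j}\lambda_{j}c_{j}^{2} \geq \lambda_{1}\sum_{j}c_{j}^{2} = \lambda_{1}\norm{\vec{u}}_{H}^{2},$$
which is exactly $\norm{\vec{u}}_{V}^{2} \geq \lambda_{1}\norm{\vec{u}}_{H}^{2}$. The displayed equality $\norm{\vec{u}}_{V}^{2} = \norm{A^{1/2}\vec{u}}_{H}^{2}$ is just the definition of the $V$-norm together with the identity $\langle A\vec{u},\vec{u}\rangle_{H} = \norm{A^{1/2}\vec{u}}_{H}^{2}$.

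For the equivalence of norms I would use that $\div\,\vec{u} = 0$ for $\vec{u} \in V$, so that $A\vec{u} = -\Delta\vec{u}$ and hence the $H^{1}$-norm defined earlier reads $\norm{\vec{u}}_{H^{1}}^{2} = \norm{\vec{u}}_{H}^{2} + \langle A\vec{u},\vec{u}\rangle_{H} = \norm{\vec{u}}_{H}^{2} + \norm{\vec{u}}_{V}^{2}$. The bound $\norm{\vec{u}}_{V} \leq \norm{\vec{u}}_{H^{1}}$ is then immediate, while \eqref{Poincare} gives $\norm{\vec{u}}_{H^{1}}^{2} \leq \left(1 + \lambda_{1}^{-1}\right)\norm{\vec{u}}_{V}^{2}$, so the two norms are equivalent with explicit constants. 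The only points requiring care are the identification $A\vec{u} = -\Delta\vec{u}$ on divergence-free fields and the convergence of the eigenexpansion in the $V$-norm for $\vec{u} \in V$; both are guaranteed by the stated mapping properties of $\curl$ and $\curl_{n}$ and by the characterization $V = \curl(H^{2}(S^{2})) = D(A^{1/2})$. Consequently no genuine obstacle arises, and this lemma is a direct consequence of the spectral theory already in place.
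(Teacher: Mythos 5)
Your proof is correct. The paper itself offers no proof of this lemma --- it is stated as a well-known estimate with a citation to Il'in, following the remark that standard results will not be proved --- and your spectral argument is exactly the standard one implicit in the properties of $A$ the paper recalls (self-adjoint, positive-definite, orthonormal eigenbasis in $H$, orthogonal in $V$), including the one point genuinely requiring care, namely the identification $V = D(A^{1/2})$, which you correctly flag and which is consistent with the paper's definition of $V$ and its statement about the eigenbasis.
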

\noindent It is worth mentioning that $\lambda_{1}$ is the first eigenvalue of the scalar Laplacian on the sphere (\cite{Ilin}, p.567).


\begin{lemma}\label{trilinearestimateslemma} For $\vec{u},\vec{v},\vec{w} \in \ V$, the trilinear form satisfies
\begin{align}
	& b(\vec{u},\vec{v},\vec{v})= 0, \label{bproperties} \\
	& \left|b(\vec{u},\vec{v},\vec{w})\right|
	\leq
		k\norm{\vec{u}}^{1/2}_{H}\norm{\vec{u}}^{1/2}_{H^{1}}
		\norm{\vec{v}}_{H^{1}}\norm{\vec{w}}^{1/2}_{H}
		\norm{\vec{w}}^{1/2}_{H^{1}}. \label{bestimate2}
\end{align}
If $\vec{v}\in H^{2}\cap V$ then
\begin{align}
	&  b(\vec{v},\vec{v},A\vec{v})=0,     \label{bproperty} \\
	& \left|b(\vec{u},\vec{v},\vec{w})\right|
	\leq
		k\norm{\vec{u}}_{H}^{1/2}\norm{\vec{u}}_{H^{1}}^{1/2}
		\norm{\vec{v}}^{1/2}_{H^{1}}
		\norm{\vec{v}}^{1/2}_{H^{2}}\norm{\vec{w}}_{H}.  \label{bestimate4}  
\end{align}  
\end{lemma}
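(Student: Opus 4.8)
The plan is to reduce each of the four claims to pointwise vector identities on the embedded sphere $S^{2}\subset\R^{3}$, the divergence theorem on a boundaryless manifold, and a two-dimensional Ladyzhenskaya inequality. For \eqref{bproperties} I would show that the integrand vanishes pointwise after setting $\vec w=\vec v$ in the symmetrized defining formula for $b$. The middle contribution $\curl_{n}\vec u\times\vec v\cdot\vec v$ is zero because $\curl_{n}\vec u$ is a normal vector, so $\curl_{n}\vec u\times\vec v$ is a multiple of $\vec n\times\vec v$, which is orthogonal to $\vec v$. The remaining two terms $-\vec u\times\vec v\cdot\curl_{n}\vec v$ and $-\vec u\times\curl_{n}\vec v\cdot\vec v$ are scalar triple products that agree up to a transposition of two entries and hence cancel. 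Thus $b(\vec u,\vec v,\vec v)\equiv 0$ pointwise, and no integration is needed.

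For the two estimates \eqref{bestimate2} and \eqref{bestimate4} I would start from the conventional representation of $b$ as an integral of $(\vec u\cdot\nabla)\vec v\cdot\vec w$ recorded in the remark, so that $|b(\vec u,\vec v,\vec w)|$ is controlled by an integral of $|\vec u|\,|\nabla\vec v|\,|\vec w|$. For \eqref{bestimate2}, H\"older's inequality with exponents $(4,2,4)$ gives $|b(\vec u,\vec v,\vec w)|\leq k\norm{\vec u}_{L^{4}}\norm{\vec v}_{H^{1}}\norm{\vec w}_{L^{4}}$, and the two-dimensional Ladyzhenskaya inequality $\norm{\vec f}_{L^{4}}\leq k\norm{\vec f}_{H}^{1/2}\norm{\vec f}_{H^{1}}^{1/2}$ on the compact surface $S^{2}$, applied to $\vec u$ and to $\vec w$, produces the stated bound. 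For \eqref{bestimate4}, H\"older with exponents $(4,4,2)$ gives $|b(\vec u,\vec v,\vec w)|\leq k\norm{\vec u}_{L^{4}}\norm{\nabla\vec v}_{L^{4}}\norm{\vec w}_{H}$, and applying the same inequality to $\vec u$ and to $\nabla\vec v$ --- using $\norm{\nabla\vec v}_{H^{1}}\leq k\norm{\vec v}_{H^{2}}$, legitimate since the $V$-norm and $H^{1}$-norm are equivalent by \eqref{Poincare} --- yields the claim. The only input beyond H\"older is the validity of the Ladyzhenskaya/Gagliardo--Nirenberg inequality on $S^{2}$, which I would quote from the sphere literature.

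The identity \eqref{bproperty}, $b(\vec v,\vec v,A\vec v)=0$, is the real obstacle, since this cancellation is special to closed surfaces and fails on domains with boundary. I would pass to the vorticity formulation. Writing the scalar vorticity $\zeta$ through $\curl_{n}\vec v=\vec n\,\zeta$, the sphere identities give $A\vec v=\curl\curl_{n}\vec v=\curl(\vec n\zeta)=-\vec n\times\nabla\zeta$ and $-\vec v\times\curl_{n}\vec v=\zeta\,(\vec n\times\vec v)$, so that $b(\vec v,\vec v,A\vec v)=-\int_{S^{2}}\zeta\,(\vec n\times\vec v)\cdot(\vec n\times\nabla\zeta)\,dS^{2}$. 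Because $\vec v$ and $\nabla\zeta$ are tangent and $\vec n$ is a unit normal, the identity $(\vec n\times\vec b)\cdot(\vec n\times\vec c)=\vec b\cdot\vec c$ collapses this to $-\tfrac12\int_{S^{2}}\vec v\cdot\nabla(\zeta^{2})\,dS^{2}$; integrating by parts on the boundaryless manifold $S^{2}$ and using $\div\vec v=0$ gives zero. The delicate points are the correct bookkeeping with the operators $\curl$ and $\curl_{n}$ (in particular confirming $\curl(\vec n\zeta)=-\vec n\times\nabla\zeta$) and the regularity $\vec v\in H^{2}\cap V$, which guarantees $\zeta\in H^{1}$ so that the integration by parts is justified; these are exactly the places where the structure of the sphere, rather than a flat domain, is used.
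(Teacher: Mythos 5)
Your proof is correct, but it necessarily takes a different route from the paper, because the paper does not prove this lemma at all: the appendix states that proofs are ``only provided for less standard results'' and defers these trilinear facts to the cited literature (Constantin--Foias, Il'in, Kuksin--Shirikyan); the only trilinear identities actually proved in the paper are those of Lemma \ref{trilinearestimateslemma2}, concerning terms of the form $\left\langle \curl_{n}\vec{u}\times\vec{v},\vec{w}\right\rangle$. Your argument supplies the missing details, and it does so with exactly the ingredients the paper itself makes available. For \eqref{bproperties}, the pointwise cancellation in the symmetrized formula is right: the middle term dies because $\curl_{n}\vec{u}$ is normal, so $\curl_{n}\vec{u}\times\vec{v}$ is proportional to $\vec{n}\times\vec{v}\perp\vec{v}$, and the outer two terms are scalar triple products differing by a single transposition. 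For \eqref{bproperty}, passing to the stream-function/vorticity picture via the paper's own definitions, $\curl_{n}\vec{v}=\vec{n}\zeta$, $A\vec{v}=\curl(\vec{n}\zeta)=-\vec{n}\times\nabla\zeta$, $-\vec{v}\times\curl_{n}\vec{v}=\zeta(\vec{n}\times\vec{v})$, reduces the pairing to $-\frac{1}{2}\int_{S^{2}}\vec{v}\cdot\nabla(\zeta^{2})\,dS^{2}=0$ by $\div\vec{v}=0$ on the closed surface; this enstrophy cancellation is indeed the point that is special to boundaryless manifolds, and it is the same stream-function calculus the paper uses to prove Lemma \ref{trilinearestimateslemma2}. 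The estimates \eqref{bestimate2} and \eqref{bestimate4} via H\"older with exponents $(4,2,4)$ and $(4,4,2)$ plus Ladyzhenskaya interpolation are standard, and you correctly use the inhomogeneous form $\norm{f}_{L^{4}}\leq k\norm{f}_{H}^{1/2}\norm{f}_{H^{1}}^{1/2}$, which is the one valid on a compact manifold (the homogeneous form fails for constants). Two points would need to be made explicit in a full write-up: the identification $b(\vec{v},\vec{v},A\vec{v})=\left\langle -\vec{v}\times\curl_{n}\vec{v},A\vec{v}\right\rangle$ discards the potential term $\left\langle \nabla|\vec{v}|^{2}/2,A\vec{v}\right\rangle$, which vanishes only after one more integration by parts using $\div A\vec{v}=0$ (the paper's pairing identity is stated for test fields in $V$, and $A\vec{v}$ is merely in $H$); and in \eqref{bestimate4} the Ladyzhenskaya inequality is applied to the tensor field $\nabla\vec{v}$, a componentwise extension that holds on $S^{2}$ but deserves a citation in that form. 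Neither is a gap in the idea; both are routine. In short, the paper buys brevity by quoting the literature, while your proof buys self-containedness at essentially no extra cost.
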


For Lemma \ref{whenhaveexponentialconvergence}, it will be necessary to have estimates involving terms of the form $\left\langle \curl_{n}v\times u,w\right\rangle$.

\begin{lemma}\label{trilinearestimateslemma2} For $\vec{u},\vec{v},\vec{w} \in \ V$, the following hold: 
For any $u,v\in H^{1}$
\begin{equation}
\left\langle \curl_{n}u\times v ,v\right\rangle =0.  \label{strongerthanskiba}
\end{equation}

If $v\in H^{2}\cap V$ and $u$ is a zonal vector field (only latitudinal dependence) then
\begin{align}
	& \left\langle \curl_{n}u\times u,v\right\rangle =0 \label{strongerthanskiba2} \\
	& \left\langle \curl_{n}v\times u,Av\right\rangle =0. \label{strongerthanskiba3}
\end{align}

Furthermore, if $\vec{u}=g(t)\curl \sin(\phi)\vec{n}$ ($g(t)$ is uniformly bounded) and $\vec{v}\in H^{2}\cap V$ then
\begin{align}
	 & \left\langle \curl_{n}v\times u,v\right\rangle =0 \label{Calc3} \\  
	 & \left\langle \curl_{n}u\times v,Av\right\rangle =0. \label{Calc4}
	 \end{align}                               
\end{lemma}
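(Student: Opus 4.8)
The plan is to prove the five identities by exploiting the algebraic structure of the scalar triple product on $\R^{3}$ together with integration by parts on $S^{2}$. The governing observation is that for tangent vectors, the quantity $\curl_{n}\vec{u}$ is a \emph{normal} vector (it equals $-\vec{n}\,\div(\vec{n}\times\vec{u})$), so expressions of the form $\curl_{n}\vec{u}\times\vec{v}$ are again tangent, and the pointwise scalar triple product identity $\vec{a}\times\vec{b}\cdot\vec{c}=\vec{a}\cdot\vec{b}\times\vec{c}=-\vec{a}\times\vec{c}\cdot\vec{b}$ can be applied inside each integrand. First I would dispatch \eqref{strongerthanskiba}: writing $\langle\curl_{n}u\times v,v\rangle=\int_{S^{2}}(\curl_{n}u\times v)\cdot v\,dS^{2}$ and using that $\curl_{n}u$ is normal, the integrand is the triple product $\curl_{n}u\cdot(v\times v)=0$ pointwise, since $v\times v=0$. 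This is immediate and requires no integration by parts.

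For the zonal identities \eqref{strongerthanskiba2} and \eqref{strongerthanskiba3}, the key step is to use that a zonal field $u$ depends only on latitude $\phi$, so its curl has a special form and the associated pointwise algebra simplifies. For \eqref{strongerthanskiba2} I would again reduce the integrand to a scalar triple product and use that $\curl_{n}u$ and $u$ are related by the zonal structure (so that $\curl_{n}u\times u$ is purely a gradient of a zonal scalar, which is $L^{2}$-orthogonal to any divergence-free $v\in V$ after integration by parts, using $\div v=0$). For \eqref{strongerthanskiba3} the plan is similar but now one tests against $Av=\curl\curl_{n}v$; I would integrate by parts to move one $\curl$ onto the other factor, turning the pairing into one of the already-established forms or into an integral whose integrand vanishes by the zonal symmetry. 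The special solutions \eqref{Calc3} and \eqref{Calc4} are the case $u=g(t)\curl\sin(\phi)\vec{n}$, which is a specific zonal field; I would verify that $\curl\sin(\phi)\vec{n}$ is indeed zonal (its $\curl_{n}$ is a multiple of $\sin\phi$, i.e. proportional to the first spherical harmonic) and then apply \eqref{strongerthanskiba2}--\eqref{strongerthanskiba3} as special instances, absorbing the uniformly bounded scalar $g(t)$ as a harmless prefactor.

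The main obstacle I expect is \eqref{strongerthanskiba3} (and correspondingly \eqref{Calc4}), the identity paired against $Av$. Here the difficulty is twofold: one must integrate by parts against the second-order operator $A=\curl\curl_{n}$ on the sphere, which requires care with the boundary-free but nontrivial commutation of $\curl$ and $\curl_{n}$ on a curved manifold, and one must then extract the exact cancellation coming from the zonal symmetry rather than a mere estimate. My strategy there is to exploit the adjoint relation $\langle\curl\psi,w\rangle=\langle\psi,\curl_{n}w\rangle$ (valid since $\curl$ and $\curl_{n}$ are formal adjoints between tangent and normal fields on the closed manifold $S^{2}$) to rewrite $\langle\curl_{n}v\times u,\curl\curl_{n}v\rangle$ as a pairing of $\curl_{n}(\curl_{n}v\times u)$ against $\curl_{n}v$, and then show that the zonal structure of $u$ forces the resulting scalar integrand into an exact differential in $\lambda$ (the longitude), whose integral over the full circle $0\le\lambda\le 2\pi$ vanishes. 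The remaining identities are then either immediate or follow by the same symmetry bookkeeping.
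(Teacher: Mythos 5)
Your arguments for the first three identities are essentially sound. In fact your pointwise proof of \eqref{strongerthanskiba} — the integrand is the scalar triple product $\curl_{n}u\cdot(v\times v)=0$ — is simpler than the paper's, which writes $v$ through a stream function $\psi$ and reduces the pairing to $\frac{1}{2}\int J(\Delta\bar{\psi},\psi^{2})\,dS^{2}=0$ via the spherical Jacobian and Stokes' theorem. Your gradient-orthogonality argument for \eqref{strongerthanskiba2} and your ``exact differential in $\lambda$'' argument for \eqref{strongerthanskiba3} are the same mechanism the paper (following Il'in) uses: for zonal $u$ the Jacobian $J(\cdot,\bar{\psi})$ collapses to $h(\phi)\partial_{\lambda}(\cdot)$ with $h(\phi)=\partial_{\phi}\bar{\psi}/\cos\phi$, and in \eqref{strongerthanskiba3} the quadratic identity $J(a,b)\,a=\frac{1}{2}J(a^{2},b)$ makes the integrand $-\frac{1}{2}h(\phi)\,\partial_{\lambda}\bigl((\Delta\psi)^{2}\bigr)$, whose $\lambda$-integral vanishes by periodicity.

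The genuine gap is your treatment of \eqref{Calc3} and \eqref{Calc4}. These are \emph{not} special instances of \eqref{strongerthanskiba2}--\eqref{strongerthanskiba3}: no substitution turns $\left\langle \curl_{n}v\times u,v\right\rangle$ into $\left\langle \curl_{n}u\times u,v\right\rangle$ or $\left\langle \curl_{n}v\times u,Av\right\rangle$ (the repeated argument occupies different slots), and similarly for $\left\langle \curl_{n}u\times v,Av\right\rangle$; moreover they do not follow from zonality alone, which is exactly why the lemma states them only for $u=g(t)\curl\sin(\phi)\vec{n}$ rather than for general zonal fields. Structurally, in \eqref{Calc3} the Jacobian's argument $\Delta\psi$ is mismatched with the test function $\psi$, so the pairing reduces to $\int h(\phi)\,(\partial_{\lambda}\Delta\psi)\,\psi\,dS^{2}$ and the quadratic trick is unavailable; one must instead integrate by parts in $\lambda$ and use that $\partial_{\lambda}$ commutes with $\Delta$ and is antisymmetric, to conclude $\int(\Delta\psi)\,\partial_{\lambda}\psi\,dS^{2}=0$. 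That step closes only when $h(\phi)$ is a spatial constant, i.e.\ when $\bar{\psi}\propto\sin\phi$ (the first spherical harmonic); for a general zonal field, multiplication by the nonconstant factor $h(\phi)$ does not commute with $\Delta$ and the cancellation is lost. This is precisely the paper's proof of \eqref{Calc3}: $J(\Delta\psi,g(t)\sin\phi)=g(t)\,\partial_{\lambda}\Delta\psi$, integration by parts in $\lambda$, and then the calculation of Il'in (p.~62 of the cited reference); \eqref{strongerthanskiba3} and \eqref{Calc4} are likewise delegated to Il'in's computation (pp.~69--70). So as written, your plan proves the two genuinely zonal identities but leaves unproven the two identities that carry the first-spherical-harmonic structure — which are the ones Lemma \ref{whenhaveexponentialconvergence} actually needs.
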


\begin{proof}
The proof of \eqref{strongerthanskiba3} and \eqref{Calc4} can be found on pages 69-70 of \cite{Ilin2} (the proof of \eqref{strongerthanskiba3} holds for any zonal force) since the time-dependent function pulls out of the estimate. It remains to prove \eqref{strongerthanskiba}, \eqref{strongerthanskiba2} and \eqref{Calc3}. The arguments are similar to calculations for \eqref{strongerthanskiba3} and \eqref{Calc4}.

Since the sphere is simply connected, for a divergence-free vector field $\vec{u}$, there is a flow function $\psi$ (\cite{Ilin}, pp. 567-568) with mean zero
\begin{equation} \nonumber
		 \vec{u}= -\curl \psi\vec{n} = \vec{n}\times \nabla \psi, \quad
		\curl_{n}\vec{u} = \Delta \psi \vec{n},
\end{equation} 
\noindent where $\Delta$ is the spherical Laplacian for functions.

\noindent For the following calculation, we will need the following information about the spherical Jacobian (\cite{Ilin2}, p.51). Let $\vec{u}= -\curl \bar{\psi}\vec{n}$ and $\vec{v}= -\curl \psi\vec{n}$.
\begin{align}
	& J(a,b)=-\curl_{n}\left(\vec{n}a\times \left(\vec{n}a\times\nabla b\right)\right) = \frac{1}{\cos\phi}\left(\frac{\partial a}{\partial \lambda}\frac{\partial b}{\partial \phi}-\frac{\partial b}{\partial \lambda}\frac{\partial a}{\partial \phi}\right), \nonumber \\
	& \curl_{n}\left(\curl_{n}\vec{v}\times \vec{u}\right)=-J(\Delta \psi,\bar{\psi}), \label{formingJacobian}\\
	& \int_{M}J(a,b)dS^{2}=0, \quad \quad \text{by \ Stoke's \ Theorem}.
\end{align}

Notice that if $\vec{u}$ is zonal, then $\curl_{n}\left(\curl_{n}\vec{u},\vec{u}\right)=-J(\bar{\psi},\bar{\psi})=0$, which establishes \eqref{strongerthanskiba2}.

The following calculation is sufficient to establish \eqref{strongerthanskiba}.
\begin{equation}
	\begin{split}
		\left\langle \curl_{n}\vec{u}\times \vec{v},\vec{v} \right\rangle 
		& = -\left\langle \curl_{n}\vec{u}\times \vec{v},\curl \psi \right\rangle \\
		& = -\left\langle \curl_{n}\left(\vec{n}\Delta \bar{\psi}\times\vec{v}\right), \psi \right\rangle \\
		& =  \int J(\Delta \bar{\psi}, \psi)\psi dS^{2} \\
		& =  \frac{1}{2}\int J(\Delta \bar{\psi}, \psi^{2}) dS^{2} =0.
	\end{split}
\end{equation}

Finally, suppose that $\vec{u}$ is zonal of the form $g(t)\curl \sin(\phi)\vec{n}$. Denoting $\vec{v}=-\curl\psi$, then
\begin{equation}
\begin{split}
  \left\langle \curl_{n}\vec{v}\times \vec{u},\vec{v} \right\rangle
  & = -\left\langle \curl_{n}\vec{v}\times \vec{u},\curl \psi \right\rangle \\
  & = -\left\langle \curl_{n}\left(\vec{n}\Delta \psi\times\vec{u}\right), \psi \right\rangle \\
  & =  \int J(\Delta \psi, g(t)\sin(\phi))\psi dS^{2} \\
  & = g(t)\int_{M} (\partial_{\lambda}\Delta\psi) \psi dS^{2} \\
	& \text{using \ Integration \ by \ Parts} \\
	& = -g(t)\int_{M} (\Delta\psi) \partial_{\lambda}\psi dS^{2}.
\end{split}
\end{equation}
The remainder of the calculation is identical to the calculation on p. 62 of \cite{Ilin2}, which establishes \eqref{Calc3}.
\end{proof}


The following lemma will allow for the Coriolis term $C(\vec{u})$ to vanish from all the estimates. Its proof only uses that the Laplacian commutes with differentiability in the longitudinal direction - see \cite{Skiba1}, p. 635.

\begin{lemma}\label{lemmaforzonalestimates} 
For smooth vector fields $\vec{u}$, the following holds for $r\geq 0$
\begin{equation}
	\left\langle C(\vec{u}),A^{r}\vec{u}\right\rangle_{H}
	=\left\langle l\,\vec{n}\times \vec{u}, A^{r}\vec{u}\right\rangle= 0.  \label{Skiba-estimate}
\end{equation}
\end{lemma}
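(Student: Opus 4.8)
The plan is to pass to the stream-function formulation, where the Coriolis pairing becomes a single scalar integral against $\sin\phi$, and then to exploit the rotational symmetry of the sphere about its polar axis. First I would dispose of the first equality in \eqref{Skiba-estimate}: since $P_{H}$ is the orthogonal projection onto $H$ and $A^{r}\vec{u}\in H$, we have $\langle C(\vec{u}),A^{r}\vec{u}\rangle_{H}=\langle P_{H}(l\,\vec{n}\times\vec{u}),A^{r}\vec{u}\rangle=\langle l\,\vec{n}\times\vec{u},A^{r}\vec{u}\rangle$. The case $r=0$ is then immediate, because $\vec{n}\times\vec{u}$ is the pointwise $90^{\circ}$ rotation of the tangent vector $\vec{u}$, so the integrand $l(\vec{n}\times\vec{u})\cdot\vec{u}$ vanishes identically and the integral is zero.

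For general $r$, since $\vec{u}$ is smooth and divergence-free I would write $\vec{u}=\vec{n}\times\nabla\psi$ for a mean-zero stream function $\psi$, exactly as in the calculation preceding \eqref{formingJacobian}. Two identities drive the argument. Using $\nabla\psi\cdot\vec{n}=0$ together with the expansion of $\vec{n}\times(\vec{n}\times\nabla\psi)$ gives $\vec{n}\times\vec{u}=-\nabla\psi$; and the correspondence between $A=\curl\curl_{n}$ on divergence-free fields and $-\Delta$ on stream functions shows that $A^{r}\vec{u}=\vec{n}\times\nabla\chi$ with $\chi=(-\Delta)^{r}\psi$ (for integer $r$ by iterating $A\vec{u}=\vec{n}\times\nabla(-\Delta\psi)$, and in general through the spectral calculus, since the eigenspaces match with identical eigenvalues). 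Substituting these and using the scalar triple product identity $\nabla\psi\cdot(\vec{n}\times\nabla\chi)=\vec{n}\cdot(\nabla\chi\times\nabla\psi)=J(\chi,\psi)$ reduces the pairing to
\begin{equation}
\langle l\,\vec{n}\times\vec{u},A^{r}\vec{u}\rangle = -2\Omega\int_{S^{2}}\sin\phi\,J\big((-\Delta)^{r}\psi,\psi\big)\,dS^{2}. \nonumber
\end{equation}

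Next I would invoke the cyclic antisymmetry of the Jacobian, $\int_{S^{2}}c\,J(a,b)\,dS^{2}=\int_{S^{2}}a\,J(b,c)\,dS^{2}$, which follows from $J(a,b)=-\div(a\,\vec{n}\times\nabla b)$ and integration by parts on the closed surface $S^{2}$. Taking $c=\sin\phi$ and using that $\sin\phi$ depends only on latitude yields the key simplification $J(\psi,\sin\phi)=\partial_{\lambda}\psi$, so the displayed integral collapses to (a constant times) $\int_{S^{2}}(-\Delta)^{r}\psi\,\partial_{\lambda}\psi\,dS^{2}$. Here the rotational symmetry enters: $\partial_{\lambda}$ is skew-adjoint on $L^{2}(S^{2})$ (integrate by parts in the periodic variable $\lambda$, the measure $\cos\phi\,d\phi\,d\lambda$ being $\lambda$-independent), $(-\Delta)^{r}$ is self-adjoint, and the two operators commute because the spherical Laplacian commutes with longitudinal differentiation, $\Delta\,\partial_{\lambda}=\partial_{\lambda}\Delta$ (\cite{Skiba1}). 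Moving $(-\Delta)^{r}$ across by self-adjointness and then $\partial_{\lambda}$ across by skew-adjointness shows the integral equals its own negative, hence it vanishes.

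The main obstacle is not any single estimate but assembling the algebraic identities cleanly and consistently: establishing the cyclic integration-by-parts rule for $J$, verifying the stream-function correspondence $A^{r}\leftrightarrow(-\Delta)^{r}$, and tracking the orientation conventions for $\vec{n}\times\vec{\lambda}$ and $\vec{n}\times\vec{\phi}$ so the signs in $\vec{n}\times\vec{u}=-\nabla\psi$ and in $J$ come out correctly. The one genuinely structural input is the commutation $\Delta\,\partial_{\lambda}=\partial_{\lambda}\Delta$, which is precisely the invariance of the Laplacian under rotations about the polar axis; this is also where the zonal form $l=2\Omega\sin\phi$ of the Coriolis coefficient is used, since it is $\sin\phi$ being a function of $\phi$ alone that produces the pure longitudinal derivative. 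Everything else is bookkeeping with the Jacobian and integration by parts.
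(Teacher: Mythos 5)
Your proof is correct and is essentially the paper's own argument: the paper gives no computation at all, remarking only that the proof ``uses that the Laplacian commutes with differentiability in the longitudinal direction'' (citing \cite{Skiba1}), and that commutation $\Delta\,\partial_{\lambda}=\partial_{\lambda}\Delta$ is exactly the structural input your stream-function/Jacobian calculation makes precise. The one caveat is that for $r\geq 1$ your argument relies on the stream-function representation and hence proves the lemma for divergence-free fields, which is the scope in which the paper actually applies it despite the looser wording ``smooth vector fields'' in the statement.
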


\noindent We now turn to the proofs of Condition \ref{generalcontraction} and Lemma \ref{whenhaveexponentialconvergence}. Since most of the inequalities in Condition \ref{generalcontraction} are well-known, the necessary estimates will be given without proof. We instead provide the proof for inequality \eqref{Condition2} and Lemma \ref{whenhaveexponentialconvergence}. Recall that $\vec{f}\in L^{\infty}(0,\infty;H)$.

\subsection{Proof of Inequality \eqref{Condition2}}

Let $S_{t,t_{0}}\vec{u}_{0}$ be the solution of the 2D Navier-Stokes equations with initial condition $\vec{u}_{0}$ at time $t_{0}$ (in particular, $S_{t_{0},t_{0}}u=u_{0}$). The following estimates will be needed (cf. \cite{Babin1, Constantin1, Ilin, Kuksin7, Robinson}).


\begin{lemma}
The following inequalities hold for the deterministic 2D Navier-Stokes equation on the sphere for all $t \geq t_{0}$:
 \begin{align}
         &    \norm{S_{t,t_{0}}\vec{u}_{0}}^{2}_{H} \leq \norm{\vec{u}_{0}}^{2}_{H}e^{-\lambda_{1}\nu (t-t_{0})}
             +
             \dfrac{\norm{\vec{f}}_{L^{\infty}(0,\infty;H)}^{2}}{\nu^{2}\lambda_{1}}
              \left(1-e^{-\nu\lambda_{1}(t-t_{0})}\right)
              \label{Hcontraction}\\
				& \nu\int_{s}^{t+s}\norm{S_{\tau,t_{0}}\vec{u}_{0}}_{H^{1}}^{2}d\tau 
					\leq \norm{S_{s,t_{0}}\vec{u}_{0}}_{H}^{2}+ \dfrac{t}{\nu}\norm{\vec{f}}_{L^{\infty}(0,\infty;H)}^{2}. \label{integralinequality} \\
         & \norm{S_{t,t{0}}\vec{u}_{0}}^{2}_{H^{1}}
             \leq \norm{\vec{u}_{0}}^{2}_{H^{1}}e^{-\lambda_{1}\nu (t-t_{0})}
             +
             \dfrac{\norm{\vec{f}}^{2}_{L^{\infty}(0,\infty;H)}}{\nu^{2}\lambda_{1}}
             \left(1-e^{-\nu\lambda_{1}(t-t_{0})}\right)
             \label{Vcontraction} \\
					& \nu\int_{s}^{t+s}\norm{S_{\tau,t_{0}}\vec{u}_{0}}_{H^{2}}^{2}d\tau 
					\leq \norm{S_{s,t_{0}\vec{u}_{0}}}_{H^{1}}^{2}+ tC(\norm{\vec{f}}_{L^{\infty}(0,\infty;H)}), \label{H2integralbounda} 
     \end{align}
     where $\lambda_{1}$ is the first eigenvalue of the operator $-\Delta$ on functions.

Moreover, for any $t\geq \dfrac{1}{2}+t_{0}$
\begin{equation}
    \norm{S_{t,t_{0}}\vec{u}_{0}}^{2}_{H^{1}} \leq K\norm{\vec{u}_{0}}^{2}_{H}e^{-\nu\lambda_{1}(t-t_{0})}+ C_{1}\norm{\vec{f}}_{L^{\infty}(0,\infty;H)}^{2}. \label{VtoHcontraction}
\end{equation}
\end{lemma}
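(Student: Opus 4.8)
The plan is to obtain the $H^{1}$-smoothing estimate \eqref{VtoHcontraction} by combining the integrated dissipation bound \eqref{integralinequality} with the $H^{1}$-decay estimate \eqref{Vcontraction}, via the standard ``good intermediate time'' argument. The underlying principle is instantaneous parabolic regularization: although $\vec{u}_{0}$ is only assumed to lie in $H$, the solution becomes a strong ($V$-valued) solution for every positive time, so it suffices to locate a single time $s^{*}\in[t_{0},t_{0}+\tfrac{1}{2}]$ at which the $H^{1}$-norm is already controlled purely by $\norm{\vec{u}_{0}}_{H}$ and the force, and then to let the $V$-decay take over from $s^{*}$ onward.

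First I would apply \eqref{integralinequality} with $s=t_{0}$ and $t=\tfrac{1}{2}$ to get
$$\nu\int_{t_{0}}^{t_{0}+1/2}\norm{S_{\tau,t_{0}}\vec{u}_{0}}_{H^{1}}^{2}\,d\tau \leq \norm{\vec{u}_{0}}_{H}^{2}+\frac{1}{2\nu}\norm{\vec{f}}_{L^{\infty}(0,\infty;H)}^{2}.$$
Since the interval has length $\tfrac{1}{2}$, the mean value theorem for integrals yields a time $s^{*}\in[t_{0},t_{0}+\tfrac{1}{2}]$ with
$$\norm{S_{s^{*},t_{0}}\vec{u}_{0}}_{H^{1}}^{2}\leq \frac{2}{\nu}\norm{\vec{u}_{0}}_{H}^{2}+\frac{1}{\nu^{2}}\norm{\vec{f}}_{L^{\infty}(0,\infty;H)}^{2},$$
so at time $s^{*}$ the solution lies in $V$ with norm bounded in terms of the initial $H$-datum alone. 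I would then restart the flow at $s^{*}$ and apply the $H^{1}$-decay estimate \eqref{Vcontraction} on $[s^{*},t]$,
$$\norm{S_{t,t_{0}}\vec{u}_{0}}_{H^{1}}^{2}\leq \norm{S_{s^{*},t_{0}}\vec{u}_{0}}_{H^{1}}^{2}\,e^{-\nu\lambda_{1}(t-s^{*})}+\frac{\norm{\vec{f}}_{L^{\infty}(0,\infty;H)}^{2}}{\nu^{2}\lambda_{1}}.$$
Because $s^{*}\leq t_{0}+\tfrac{1}{2}$ and $t\geq t_{0}+\tfrac{1}{2}$, one has $t-s^{*}\geq (t-t_{0})-\tfrac{1}{2}$, hence $e^{-\nu\lambda_{1}(t-s^{*})}\leq e^{\nu\lambda_{1}/2}e^{-\nu\lambda_{1}(t-t_{0})}$. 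Substituting the bound on $\norm{S_{s^{*},t_{0}}\vec{u}_{0}}_{H^{1}}^{2}$ and absorbing the shift factor $e^{\nu\lambda_{1}/2}$ together with $2/\nu$, $1/\nu^{2}$, and $1/(\nu^{2}\lambda_{1})$ into new constants $K=(2/\nu)e^{\nu\lambda_{1}/2}$ and $C_{1}$ produces exactly \eqref{VtoHcontraction}.

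The argument is short because the decay estimate \eqref{Vcontraction} and the dissipation bound \eqref{integralinequality} are already in hand; the only genuinely delicate point is the legitimacy of invoking \eqref{Vcontraction} with the intermediate initial time $s^{*}$, that is, the claim that $S_{\cdot,t_{0}}\vec{u}_{0}$ is a strong, $V$-valued solution for $t>t_{0}$ despite $\vec{u}_{0}\in H$ only. This is precisely the smoothing furnished by Theorem \ref{existenceofprojectedsolution} applied from any positive time, and it rests on the same cancellations that give \eqref{Vcontraction}, namely the vanishing of the trilinear term $b(\vec{v},\vec{v},A\vec{v})=0$ from \eqref{bproperty} and of the Coriolis contribution $\langle C(\vec{u}),A\vec{u}\rangle=0$ from Lemma \ref{lemmaforzonalestimates}. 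The remaining care is purely bookkeeping: tracking the half-unit shift so that the exponential decay rate remains $\nu\lambda_{1}$ expressed in $t-t_{0}$, which the estimate $e^{-\nu\lambda_{1}(t-s^{*})}\leq e^{\nu\lambda_{1}/2}e^{-\nu\lambda_{1}(t-t_{0})}$ handles cleanly.
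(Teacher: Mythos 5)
Your argument for \eqref{VtoHcontraction} is correct: applying \eqref{integralinequality} with $s=t_{0}$, $t=\frac{1}{2}$, picking $s^{*}\in[t_{0},t_{0}+\frac{1}{2}]$ where the $H^{1}$ norm is at most twice the average, restarting the flow there via the two-parameter semigroup property $S_{t,t_{0}}=S_{t,s^{*}}\circ S_{s^{*},t_{0}}$ and invoking \eqref{Vcontraction} from the initial time $s^{*}$, and then absorbing the shift through $e^{-\nu\lambda_{1}(t-s^{*})}\leq e^{\nu\lambda_{1}/2}e^{-\nu\lambda_{1}(t-t_{0})}$ gives exactly the stated bound, with $K=(2/\nu)e^{\nu\lambda_{1}/2}$ and $C_{1}=e^{\nu\lambda_{1}/2}/\nu^{2}+1/(\nu^{2}\lambda_{1})$. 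There is, however, nothing to compare this against in the paper itself: the lemma is stated without proof as a collection of well-known estimates (cf.\ \cite{Babin1, Constantin1, Ilin, Kuksin7, Robinson}), and the text explicitly reserves proofs only for inequality \eqref{Condition2} and Lemma \ref{whenhaveexponentialconvergence}. Your ``good intermediate time'' argument is precisely the standard one found in those references, so in substance you have filled in what the paper leaves to citation. Two points would make it a complete proof of the lemma as stated. First, you prove only the fifth inequality; the inputs \eqref{Hcontraction}--\eqref{H2integralbounda} are themselves assertions of the same lemma, so a self-contained write-up should note that they follow from the routine energy estimates: inner product with $\vec{u}$ (resp.\ $A\vec{u}$), the cancellations \eqref{bproperties}, \eqref{bproperty} and \eqref{Skiba-estimate}, then the Poincare inequality \eqref{Poincare} and Gronwall. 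Second, since $\vec{u}_{0}$ is only assumed to lie in $H$, the function $\tau\mapsto\norm{S_{\tau,t_{0}}\vec{u}_{0}}_{H^{1}}^{2}$ may be unbounded near $\tau=t_{0}$, so the selection of $s^{*}$ should be phrased as ``a nonnegative measurable function with finite integral is somewhere at most twice its average'' rather than as the mean value theorem for integrals, which presupposes continuity on the closed interval; combined with the parabolic regularity of Theorem \ref{existenceofprojectedsolution} from any positive time (which, as you observe, also legitimizes applying \eqref{Vcontraction} with initial data $S_{s^{*},t_{0}}\vec{u}_{0}\in V$), this point is harmless but worth stating precisely.
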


It should be mentioned that the choice of $\frac{1}{2}$ in equation \eqref{VtoHcontraction} is arbitrary and any number can be used, only changing the constants. Furthermore, combining equations \eqref{VtoHcontraction} and \eqref{H2integralbounda} gives
\begin{equation}
	\nu\int_{s}^{t+s}\norm{S_{\tau,t_{0}}\vec{u}_{0}}_{H^{2}}^{2}d\tau 
					\leq C\norm{\vec{u}_{0}}_{H}^{2}+ tC(\norm{\vec{f}}_{L^{\infty}(0,\infty;H)}) + C(\norm{\vec{f}}_{L^{\infty}(0,\infty;H)}). \label{H2integralbound}
\end{equation}

\noindent Now consider the difference between two solutions $\vec{w}=\vec{u}-\vec{v}$
    \begin{equation}
         S_{t,t_{0}}\vec{u}-S_{t,t_{0}}\vec{v} =
            \dfrac{\partial \vec{w}}{\partial t} +
            \nu A+B(\vec{w},\vec{u})+ B(\vec{v},\vec{w}) + C(\vec{w})=0. \label{differenceofsolutions}
    \end{equation}


\begin{lemma} For all $t\geq 0$ the difference of solutions satisfies
\begin{equation}
    \norm{S_{t,t_{0}}\vec{w}_{0}}_{H}^{2}\leq \norm{\vec{w}_{0}}_{H}^{2}\times exp\left(-\nu\lambda_{1}(t-t_{0})+\frac{k^{2}}{\nu^{3}}\norm{f}_{L^{\infty}(0,\infty;H)}^{2}+\frac{k^{2}}{\nu^{2}}\norm{u_{0}}_{H}^{2}\right). \label{continuityofNS}
\end{equation}
\end{lemma}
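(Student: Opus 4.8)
The plan is to run the standard $H$-energy estimate on the difference equation and close it with Gronwall's inequality. Writing $\vec{w}=\vec{u}-\vec{v}$, equation \eqref{differenceofsolutions} gives that $\vec{w}$ solves $\partial_{t}\vec{w}+\nu A\vec{w}+B(\vec{w},\vec{u})+B(\vec{v},\vec{w})+C(\vec{w})=0$. First I would pair this equation with $\vec{w}$ in $H$, using $\langle A\vec{w},\vec{w}\rangle_{H}=\norm{\vec{w}}_{V}^{2}$ and the fact that the bilinear terms pair with $\vec{w}$ through the trilinear form $b$, to obtain the identity
\[
\tfrac{1}{2}\tfrac{d}{dt}\norm{\vec{w}}_{H}^{2}+\nu\norm{\vec{w}}_{V}^{2}+b(\vec{w},\vec{u},\vec{w})+b(\vec{v},\vec{w},\vec{w})+\langle C(\vec{w}),\vec{w}\rangle_{H}=0 .
\]

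Two of the three lower-order terms vanish: $b(\vec{v},\vec{w},\vec{w})=0$ by the antisymmetry \eqref{bproperties}, and $\langle C(\vec{w}),\vec{w}\rangle_{H}=0$ by the Coriolis identity \eqref{Skiba-estimate} of Lemma \ref{lemmaforzonalestimates} with $r=0$. The surviving term $b(\vec{w},\vec{u},\vec{w})$ is estimated with the trilinear bound \eqref{bestimate2}; applying the $H^{1/2}$--$H^{1}$ interpolation to the two $\vec{w}$ slots gives $\abs{b(\vec{w},\vec{u},\vec{w})}\leq k\,\norm{\vec{w}}_{H}\norm{\vec{w}}_{H^{1}}\norm{\vec{u}}_{H^{1}}$, and since the $V$- and $H^{1}$-norms are equivalent on $V$ I treat $\norm{\vec{w}}_{H^{1}}$ as $\norm{\vec{w}}_{V}$. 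A Young inequality then splits this as $\tfrac{\nu}{2}\norm{\vec{w}}_{V}^{2}+\tfrac{k^{2}}{2\nu}\norm{\vec{u}}_{H^{1}}^{2}\norm{\vec{w}}_{H}^{2}$, the first piece is absorbed into the dissipation $\nu\norm{\vec{w}}_{V}^{2}$, and the Poincar\'e inequality \eqref{Poincare} bounds the remaining $\tfrac{\nu}{2}\norm{\vec{w}}_{V}^{2}$ below by $\tfrac{\nu\lambda_{1}}{2}\norm{\vec{w}}_{H}^{2}$.

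This leaves the scalar differential inequality $\tfrac{d}{dt}\norm{\vec{w}}_{H}^{2}\leq\big(-\nu\lambda_{1}+\tfrac{k^{2}}{\nu}\norm{\vec{u}}_{H^{1}}^{2}\big)\norm{\vec{w}}_{H}^{2}$, to which I apply Gronwall's inequality, producing the factor $\exp\!\big(-\nu\lambda_{1}(t-t_{0})+\tfrac{k^{2}}{\nu}\int_{t_{0}}^{t}\norm{S_{\tau,t_{0}}\vec{u}_{0}}_{H^{1}}^{2}\,d\tau\big)$. The accumulated integral is controlled by the energy--dissipation bound \eqref{integralinequality} with $s=t_{0}$, namely $\nu\int_{t_{0}}^{t}\norm{S_{\tau,t_{0}}\vec{u}_{0}}_{H^{1}}^{2}\,d\tau\leq\norm{\vec{u}_{0}}_{H}^{2}+\tfrac{t-t_{0}}{\nu}\norm{\vec{f}}_{L^{\infty}(0,\infty;H)}^{2}$. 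Dividing by $\nu$ and multiplying by $k^{2}$ produces exactly the exponent terms $\tfrac{k^{2}}{\nu^{2}}\norm{\vec{u}_{0}}_{H}^{2}$ and $\tfrac{k^{2}}{\nu^{3}}\norm{\vec{f}}_{L^{\infty}}^{2}$, the latter carrying the elapsed time $t-t_{0}$ that equals one in the unit-step application \eqref{Condition1a}.

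The main obstacle is the nonlinear term $b(\vec{w},\vec{u},\vec{w})$: one must spend exactly half of the dissipation in the Young step so that a clean $-\tfrac{\nu\lambda_{1}}{2}$ survives after Poincar\'e, and one must check that the two-dimensional trilinear estimate \eqref{bestimate2} applies on the sphere with the stated interpolation, which is where the dimension genuinely enters. Tracking the powers of $\nu$ through the Young balance and the enstrophy bound is the only delicate bookkeeping; the vanishing of the transport and Coriolis terms and the final Gronwall step are routine once the identities \eqref{bproperties} and \eqref{Skiba-estimate} are invoked.
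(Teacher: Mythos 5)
Your proof is correct and is precisely the standard energy--Gronwall argument that the paper itself relies on: the paper states this lemma without proof (it is listed among the well-known estimates, cf.\ \cite{Constantin1,Robinson}), and your steps --- pairing \eqref{differenceofsolutions} with $\vec{w}$, killing $b(\vec{v},\vec{w},\vec{w})$ and $\left\langle C(\vec{w}),\vec{w}\right\rangle$ via \eqref{bproperties} and \eqref{Skiba-estimate}, estimating $b(\vec{w},\vec{u},\vec{w})$ by \eqref{bestimate2}, then Young, Poincar\'e \eqref{Poincare}, Gronwall, and the energy bound \eqref{integralinequality} --- are exactly that argument. One point worth flagging: your derivation (correctly) yields the factor $(t-t_{0})$ multiplying $\frac{k^{2}}{\nu^{3}}\norm{\vec{f}}_{L^{\infty}(0,\infty;H)}^{2}$ in the exponent, which is absent from the lemma as printed; this must be a typo in the statement, since without that factor the lemma would assert unconditional exponential contraction of any two solutions, whereas Remark \ref{differenceofsolutionssatisfiescontraction} derives the contraction condition $\norm{\vec{f}}_{L^{\infty}(0,\infty;H)}<\nu^{2}\sqrt{\lambda_{1}}/k$ exactly by balancing $(t-t_{0})\frac{k^{2}}{\nu^{3}}\norm{\vec{f}}^{2}$ against $-\nu\lambda_{1}(t-t_{0})$, and you handle this correctly by noting that $t-t_{0}=1$ in the unit-step application.
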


\begin{remark}\label{differenceofsolutionssatisfiescontraction} By equation \eqref{continuityofNS} in order to ensure \eqref{FForceCondition} it is sufficient that
\begin{equation} \nonumber
    \norm{\vec{f}}_{L^{\infty}(0,\infty;H)} < \dfrac{\nu^{2}\sqrt{\lambda_{1}}}{k}. 
\end{equation}
\end{remark}


Unlike the previous estimates, we use a less standard approach in the following lemma.

\begin{lemma} Let $\vec{w}=\vec{u}-\vec{v}$ and for any $R>0$ let $\norm{\vec{u}_{0}}_{H}<R$ and $\norm{\vec{v}_{0}}_{H}<R$. The following estimate holds for all $t\geq t_{0}+1$:
\begin{equation}
    \norm{S_{t,t_{0}}\vec{w}_{0}}_{H^{1}} \leq
    C(R,f,t-t_{0})\ \norm{\vec{w}_{0}}_{H} . \label{HboundonVdifference}
\end{equation}
\end{lemma}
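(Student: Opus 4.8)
The plan is to prove a parabolic smoothing estimate: although $\vec{w}_0$ is controlled only in $H$, the viscous term regularizes the difference so that after one unit of time it is controlled in $V$ (equivalently $H^1$, by the Poincar\'e inequality). I would carry this out by combining a basic energy inequality for the difference with a $V$-level energy inequality, and then feeding both into the uniform Gronwall lemma.

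First I would extract the $H$-level information already available. Taking the $H$-inner product of the difference equation \eqref{differenceofsolutions} with $\vec{w}$, using $b(\vec{v},\vec{w},\vec{w})=0$ from \eqref{bproperties} and Lemma \ref{lemmaforzonalestimates} (with $r=0$) to discard the Coriolis term, the only surviving trilinear term $b(\vec{w},\vec{u},\vec{w})$ is estimated by \eqref{bestimate2} and absorbed via Young's inequality. Integrating over $[t_0,t_0+1]$ and using the pointwise bound \eqref{continuityofNS} on $\norm{\vec{w}}_H$ together with the integral bound \eqref{integralinequality} on $\int \norm{\vec{u}}_{H^1}^2$, this yields $\int_{t_0}^{t_0+1}\norm{\vec{w}}_V^2\,ds \leq C(R,f)\norm{\vec{w}_0}_H^2$. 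Next I would take the $H$-inner product of \eqref{differenceofsolutions} with $A\vec{w}$, obtaining $\frac{1}{2}\frac{d}{dt}\norm{\vec{w}}_V^2 + \nu\norm{A\vec{w}}_H^2 + b(\vec{w},\vec{u},A\vec{w}) + b(\vec{v},\vec{w},A\vec{w}) = 0$ once Lemma \ref{lemmaforzonalestimates} (with $r=1$) removes $\langle C(\vec{w}),A\vec{w}\rangle$. Bounding each trilinear term by \eqref{bestimate4}, using $\norm{A\vec{w}}_H \simeq \norm{\vec{w}}_{H^2}$, and applying Young's inequality to absorb the top-order factor into the viscous term $\nu\norm{A\vec{w}}_H^2$, I would arrive at a differential inequality $\frac{d}{dt}\norm{\vec{w}}_V^2 \leq g(t)\norm{\vec{w}}_V^2$ with $g(t) = C\left(\norm{\vec{u}}_{H^1}\norm{\vec{u}}_{H^2} + \norm{\vec{v}}_H^2\norm{\vec{v}}_{H^1}^2\right)$.

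The main obstacle is the term $b(\vec{w},\vec{u},A\vec{w})$, which forces the factor $\norm{\vec{u}}_{H^2}$ into $g$; this is a quantity we do not control pointwise in time, since $\vec{u}_0$ lies only in $H$ and the $H^2$-norm may blow up as $t \to t_0^+$. The resolution is that the uniform Gronwall lemma requires only that $\int_{t_0}^{t_0+1} g\,ds$ be finite, not that $g$ be bounded pointwise. By Cauchy--Schwarz, $\int \norm{\vec{u}}_{H^1}\norm{\vec{u}}_{H^2}\,ds \leq \left(\int\norm{\vec{u}}_{H^1}^2\right)^{1/2}\left(\int\norm{\vec{u}}_{H^2}^2\right)^{1/2}$, and both factors are finite by \eqref{integralinequality} and \eqref{H2integralbound}, while the remaining piece of $g$ is controlled by \eqref{Hcontraction} and \eqref{integralinequality}; hence $\int_{t_0}^{t_0+1} g\,ds \leq C(R,f)$.

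Finally, applying the uniform Gronwall lemma on $[t_0,t_0+1]$ with $y=\norm{\vec{w}}_V^2$ and the integral bound from the first step gives $\norm{\vec{w}(t_0+1)}_V^2 \leq C(R,f)\norm{\vec{w}_0}_H^2$. Repeating the argument on a sliding unit window $[t-1,t]$ for any $t\geq t_0+1$, and invoking the uniform $H$-bound \eqref{continuityofNS} at its left endpoint to re-seed the integral energy estimate, extends the bound to all $t\geq t_0+1$ with a constant absorbing the $t-t_0$ dependence. The equivalence of the $V$- and $H^1$-norms then yields the stated estimate \eqref{HboundonVdifference}.
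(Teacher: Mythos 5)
Your overall strategy (an $H$-level energy estimate giving $\int_{t_0}^{t_0+1}\norm{\vec{w}}_{H^1}^2\,d\tau \le C(R,f)\norm{\vec{w}_0}_H^2$, then a $V$-level differential inequality closed by a Gronwall-type argument) is the same as the paper's, but there is a genuine gap at the step where you assert $\int_{t_0}^{t_0+1} g\,d\tau < \infty$. Your $g$ contains $\norm{\vec{u}}_{H^1}\norm{\vec{u}}_{H^2}$, so after Cauchy--Schwarz you need $\int_{t_0}^{t_0+1}\norm{\vec{u}}_{H^2}^2\,d\tau<\infty$ for data $\vec{u}_0$ that is merely in $H$. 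This is false in general: already for the linear Stokes flow, $\int_{t_0}^{t_0+1}\norm{A e^{-(\tau-t_0)A}\vec{u}_0}_{H}^2\,d\tau \approx \sum_j \lambda_j\abs{c_j}^2$, which is finite precisely when $\vec{u}_0\in V$; parabolic smoothing gives only $\norm{\vec{u}(\tau)}_{H^2}^2 = O\left((\tau-t_0)^{-2}\right)$, which is not integrable at $\tau = t_0$. Your citation of \eqref{H2integralbound} is a misapplication: that bound is obtained by combining \eqref{H2integralbounda} with \eqref{VtoHcontraction}, and \eqref{VtoHcontraction} is valid only for $t\ge t_0+\frac{1}{2}$, so \eqref{H2integralbound} controls integrals of $\norm{\cdot}_{H^2}^2$ only over windows starting at least $\frac{1}{2}$ after the initial time, never over $[t_0,t_0+1]$ itself. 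Consequently the uniform Gronwall lemma cannot be applied on $[t_0,t_0+1]$, and that is exactly the window you need to produce the estimate at $t=t_0+1$ --- the case $t-t_0=1$ being the one actually used for the kicked system in the proof of \eqref{Condition2}. Your sliding-window remark repairs matters only for $t\ge t_0+\frac{3}{2}$; as $t\downarrow t_0+1$ the window reaches back to $t_0$ and the constant blows up.

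The paper avoids this by never letting an $H^2$-integral reach back to $t_0$: it integrates the $H$-level inequality \eqref{forVnorm} over $[t_0+\frac{1}{2},t_0+1]$ and uses the Mean Value Theorem to produce a time $s\in\left(t_0+\frac{1}{2},t_0+1\right)$ with $\nu\norm{S_{s,t_0}\vec{w}_0}_{H^1}^2\le C(R,f)\norm{\vec{w}_0}_H^2$ (estimate \eqref{boundfors}), and then runs the ordinary Gronwall inequality for the $V$-level estimate from $s$ onward, so every integral of an $H^2$-norm starts at $s\ge t_0+\frac{1}{2}$ and is legitimately bounded by \eqref{H2integralbound}. (The paper also writes the nonlinear terms as $b(\vec{u},\vec{w},A\vec{w})+b(\vec{w},\vec{v},A\vec{w})$, so that \eqref{bestimate4} places the $H^2$-norm on $\vec{w}$, absorbed by viscosity, or on $\vec{v}$; but since $\norm{\vec{v}}_{H^2}^2$ still appears in its Gronwall factor, the decomposition is cosmetic --- the essential fix is the starting time.) Your argument can be repaired minimally in the same spirit: either insert the Mean Value Theorem step, or apply the uniform Gronwall lemma with window length $\frac{1}{2}$ on $[t-\frac{1}{2},t]$ for $t\ge t_0+1$, re-seeding with \eqref{continuityofNS} at the left endpoint $t-\frac{1}{2}\ge t_0+\frac{1}{2}$. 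As written, however, the integrability claim for $g$ from $t_0$ is wrong, and the proof does not go through at $t=t_0+1$.
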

\begin{proof}
Taking the inner product equation \eqref{differenceofsolutions} with $\vec{u}$ and using equations \eqref{bproperties} and \eqref{bestimate2} gives
\begin{align}
	& \dfrac{1}{2}\partial_{t}\norm{\vec{w}}_{H}^{2}+\nu\norm{\vec{w}}_{H^{1}}^{2} 
	\leq k\norm{\vec{w}}_{H}\norm{\vec{w}}_{H^{1}}\norm{\vec{u}}_{H^{2}}  \nonumber \\
  & \leq \dfrac{\nu}{2}\norm{\vec{w}}_{H^{1}}^{2} + \dfrac{k^{2}}{2\nu}\norm{\vec{w}}^{2}_{H}\norm{\vec{u}}_{H^{1}}^{2} \nonumber \\
	\Rightarrow & \partial_{t}\norm{\vec{w}}_{H}^{2}+\nu\norm{\vec{w}}_{H^{1}}^{2} 
	\leq \dfrac{k^{2}}{\nu}\norm{\vec{w}}^{2}_{H}\norm{\vec{u}}_{H^{1}}^{2}. \label{forVnorm}
\end{align}
Integrating equation \eqref{forVnorm} and using the Mean Value Theorem give that there is an $s\in \left(t_{0}+\frac{1}{2},t_{0}+1\right)$ such that
\begin{align}
        \nu\norm{S_{s,t_{0}}\vec{w}_{0}}_{H^{1}}^{2} \nonumber
    &    =
        2\nu\int_{t_{0}+1/2}^{t_{0}+1}\norm{S_{\tau}\vec{w}_{0}}_{H^{1}}^{2}d\tau \quad \text{and \ by \ \eqref{forVnorm}}  \nonumber \\
    &    \leq
        C\norm{S_{t_{0}+1/2,t_{0}}\vec{w}_{0}}_{H}^{2}
        +
        C\int_{t_{0}+1/2}^{t_{0}+1}\norm{S_{\tau,t_{0}}\vec{w}_{0}}_{H}^{2}
        \norm{S_{\tau,t_{0}}\vec{u}_{0}}_{H^{1}}^{2}d\tau \nonumber \\
		& 	\text{using \ equations \ \eqref{integralinequality} \ and \ \eqref{continuityofNS}}	\nonumber \\
    &    \leq
        C(R,\norm{\vec{f}}_{L^{\infty}(0,\infty;H)})\norm{\vec{w}_{0}}_{H}^{2}.
        \label{boundfors}
\end{align}
Now taking the $L^{2}$ inner product of equation \eqref{differenceofsolutions} with $A\vec{w}$ gives
\begin{equation}
     \dfrac{1}{2}\partial_{t}\norm{\vec{w}}_{H^{1}}^{2}
     +
     \nu\norm{\vec{w}}_{H^{2}}^{2}
        \leq \left|b(\vec{u},\vec{w},A\vec{w})\right| + \left|b(\vec{w},\vec{v},A\vec{w})\right|.
        \label{boundedinnextstep}
\end{equation}
By equation \eqref{bestimate4}, the right side of equation \eqref{boundedinnextstep} is bounded above by
\begin{equation}
\begin{split}
    & \leq k\norm{\vec{u}}_{H^{1}}^{1/2}\norm{\vec{u}}_{H}^{1/2}\norm{\vec{w}}_{H^{1}}^{1/2}
        \norm{\vec{w}}_{H^{2}}^{1/2}\norm{\vec{w}}_{H^{2}}
        +  k\norm{\vec{w}}_{H^{1}}\norm{\vec{v}}_{H^{2}}\norm{\vec{w}}_{H^{2}} \\
    & \leq
        K\norm{\vec{u}}^{2}_{H^{1}}\norm{\vec{u}}_{H}^{2}\norm{\vec{w}}^{2}_{H^{1}}
        + \dfrac{\nu}{2}\norm{\vec{w}}_{H^{2}}^{2}
        + C\norm{\vec{w}}_{H^{1}}^{2}\norm{\vec{v}}_{H^{2}}^{2}\quad    \text{by \ Cauchy}.
\end{split}
\end{equation}
Therefore
\begin{equation}
     \partial_{t}\norm{\vec{w}}_{H^{1}}^{2}
        \leq
        \left(-\nu\lambda_{1}+K\left(\norm{\vec{u}}_{H}^{2}\norm{\vec{u}}_{H^{1}}^{2}+
        \norm{\vec{v}}_{H^{2}}^{2}\right)\right)\norm{\vec{w}}_{H^{1}}^{2}
\end{equation}
and
\begin{eqnarray}
    & \norm{S_{t,t_{0}}\vec{w}_{0}}_{H^{1}}^{2} \leq \norm{S_{s,t_{0}}\vec{w}_{0}}_{H^{1}}^{2}\times \nonumber \\
    & \text{exp}\left(-\nu\lambda_{1}(t-s) + k\int_{s}^{t}
 \left(\norm{S_{\tau}\vec{u}_{0}}_{H^{1}}^{2}\norm{S_{\tau}\vec{u}_{0}}_{H}^{2}
        +\norm{S_{\tau}\vec{v}_{0}}_{H^{2}}^{2}\right)d\tau\right).
\end{eqnarray}
By equations \eqref{Hcontraction} and \eqref{H2integralbound} this is bounded above by (increasing the integrals)
\begin{eqnarray}
        \leq & 
            \norm{S_{s,t_{0}}\vec{w}_{0}}_{H^{1}}^{2}\text{exp}\left(-\nu\lambda_{1}(t-s)\right)\times \\
        &    \text{exp}\left(C(R,f)\int_{t_{0}}^{t}
            \norm{S_{\tau,t_{0}}\vec{u}_{0}}_{H^{1}}^{2}d\tau 
						+C(R)+(t-t_{0})C\norm{\vec{f}}_{L^{\infty}(0,\infty;H)}^{2}\right). \nonumber
\end{eqnarray}
By equations \eqref{integralinequality} and \eqref{boundfors} this is bounded above by
\begin{eqnarray}
  \leq & 
         C(R,f)\norm{\vec{w}_{0}}_{H}^{2} \text{exp}\left(-\nu\lambda_{1}(t-(t_{0}+1)\right)
         \times \nonumber \\
       &     \text{exp}\left(C(R,f)
            +C(R)+(t-t_{0})C\norm{\vec{f}}_{L^{\infty}(0,\infty;H)}^{2}\right)
\end{eqnarray}
which establishes \eqref{HboundonVdifference}.
\end{proof}

\noindent We know turn to the proof of inequality \eqref{Condition2}.

Let $Q= (I-P_{N})S_{t}\vec{w}_{0} = (I-P_{N})(S_{t,t_{0}}(\vec{u}_{0}-\vec{v}_{0}))$.

\noindent Then
\begin{eqnarray}
        &    \norm{Q}_{H}^{2} 
            \leq \dfrac{1}{\lambda_{N+1}}\norm{Q}_{H^{1}}^{2}
            \leq \dfrac{1}{\lambda_{N+1}}\norm{S_{t}\vec{w}_{0}}_{H^{1}}^{2} \nonumber \\
        &    \leq \dfrac{C(R,f,t-t_{0})}{\lambda_{N+1}}\norm{\vec{w}_{0}}_{H}^{2}
            :=\gamma_{N}\norm{\vec{w}_{0}}_{H}^{2},
\end{eqnarray}
where the last step is by \eqref{HboundonVdifference}. For any $t\geq t_{0}+1$ a $N$ can be found (depending on $t-t_{0}$, R, and $\vec{f}$) such that the $\gamma_{N}$ is less than or equal to any $q>0$. Since $t-t_{0}=1$ for the kicked equations, $N$ can be chosen only depending on $R$ and $\vec{f}$.

\subsection{Proof of Lemma \ref{whenhaveexponentialconvergence}}

Since equation \eqref{thisishowsmall} is a well known results (see also Remark \ref{differenceofsolutionssatisfiescontraction}), it remains to establish that there is a unique globally attracting solution under the remaining conditions. The follow lemmas will establish this.

\begin{lemma} If the force generates a solution of the form $g(t)\curl \sin(\phi)\vec{n}$ then the solution is globally attracting.
\end{lemma}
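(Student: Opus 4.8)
The plan is to prove the stronger statement that $\vec{u}^{*}(t) = g(t)\curl\sin(\phi)\vec{n}$ is globally \emph{exponentially} attracting, which immediately yields global attraction. Let $\vec{u}$ be any solution of \eqref{projectednavierstokes} with the same force $\vec{f}$, and set $\vec{w} = \vec{u} - \vec{u}^{*}$. Subtracting the two copies of \eqref{projectednavierstokes} and using the bilinearity of $B$ together with the linearity of $C$, one obtains
\begin{equation}
\partial_{t}\vec{w} + \nu A\vec{w} + B(\vec{w},\vec{w}) + B(\vec{w},\vec{u}^{*}) + B(\vec{u}^{*},\vec{w}) + C(\vec{w}) = 0. \nonumber
\end{equation}
First I would take the $H$-inner product of this identity with $\vec{w}$ and show that every term involving $\vec{u}^{*}$ or the Coriolis operator drops out, leaving a clean energy inequality.

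Taking $\langle\,\cdot\,,\vec{w}\rangle_{H}$, the Coriolis term vanishes by Lemma \ref{lemmaforzonalestimates} with $r=0$, and the two terms $b(\vec{w},\vec{w},\vec{w})$ and $b(\vec{u}^{*},\vec{w},\vec{w})$ vanish by \eqref{bproperties}. The only surviving nonlinear contribution is the cross term $b(\vec{w},\vec{u}^{*},\vec{w})$, and proving that it vanishes is the crux of the argument and the one place the zonal structure of $\vec{u}^{*}$ is essential. Using the explicit trilinear form for $b$, I would split $b(\vec{w},\vec{u}^{*},\vec{w})$ into its three integrals: the integrand containing $\curl_{n}\vec{u}^{*}$ has the form $(\vec{w}\times\curl_{n}\vec{u}^{*})\cdot\vec{w}$ and vanishes pointwise since $(\vec{a}\times\vec{b})\cdot\vec{a}=0$; the remaining two integrals both reduce, via cyclic invariance of the scalar triple product, to $\langle\curl_{n}\vec{w}\times\vec{u}^{*},\vec{w}\rangle$, which is exactly the quantity shown to be zero in \eqref{Calc3} (this is where $\vec{u}^{*} = g(t)\curl\sin(\phi)\vec{n}$ is used). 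Hence $b(\vec{w},\vec{u}^{*},\vec{w}) = 0$.

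With all these terms eliminated, the energy identity collapses to
\begin{equation}
\frac{1}{2}\partial_{t}\norm{\vec{w}}_{H}^{2} + \nu\|A^{1/2}\vec{w}\|_{H}^{2} = 0. \nonumber
\end{equation}
I would then apply the Poincar\'e inequality \eqref{Poincare} to bound $\|A^{1/2}\vec{w}\|_{H}^{2}\geq\lambda_{1}\norm{\vec{w}}_{H}^{2}$ and invoke Gronwall's inequality to conclude $\norm{\vec{w}(t)}_{H}^{2}\leq\norm{\vec{w}(t_{0})}_{H}^{2}\,e^{-2\nu\lambda_{1}(t-t_{0})}$. Since $\vec{u}$ was arbitrary, every solution converges to $\vec{u}^{*}$ in $H$ at the uniform exponential rate $\alpha=\nu\lambda_{1}$, so $\vec{u}^{*}$ is globally (exponentially) attracting; in particular \eqref{FForceCondition} holds with $C(R)=1$. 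The main obstacle is precisely the vanishing of $b(\vec{w},\vec{u}^{*},\vec{w})$: for a general reference solution this term carries no definite sign and would force a smallness hypothesis on $\norm{\vec{f}}$, whereas the zonal form of $\vec{u}^{*}$ makes it vanish identically through \eqref{Calc3}, which is exactly why the zonal case requires no restriction on the size of the force.
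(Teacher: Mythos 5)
Your proof is correct and follows essentially the same route as the paper: both write the difference from the zonal solution, take the $H$-inner product, eliminate the Coriolis term via \eqref{Skiba-estimate} and the nonlinear terms via \eqref{bproperties} and the zonal identity \eqref{Calc3}, and conclude with Poincar\'e and Gronwall. The only cosmetic difference is that you expand $b(\vec{w},\vec{u}^{*},\vec{w})$ through the symmetrized trilinear form and a triple-product identity, whereas the paper groups the linear-in-perturbation terms into an operator $G$ and cites \eqref{strongerthanskiba} directly for one of them.
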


\begin{proof}
The proof is analogous to a calculation in \cite{Ilin2}, pp. 69-70 (done for $\vec{f}=2\nu\curl(-a\sin\phi)$).

\noindent Let $\vec{u}=\bar{\vec{u}}+\vec{u}'$ solve the time-dependent Navier-Stokes equations with forcing $\vec{f}$ where $\vec{u}'$ is a perturbation and $\bar{\vec{u}}$ is the zonal solution $g(t)\curl_{n}\sin(\phi)\vec{n}$. 
The perturbation solves
\begin{equation}
	\partial_{t}\vec{u}'+\nu A\vec{u}' + G\vec{u}' + B(\vec{u}',\vec{u}')=0,
\end{equation}
where 
\begin{equation}
	G\vec{u}'= C(\vec{u}') + \curl_{n}\bar{\vec{u}}\times\vec{u}'+ \curl_{n}\vec{u}'\times\bar{\vec{u}}.
\end{equation}
Dropping the primes for ease of notation and taking the inner product with $\vec{u}$ gives
\begin{equation}
	\dfrac{1}{2}\partial_{t}\norm{\vec{u}}_{H}^{2}
	+
	\nu \norm{\vec{u}}_{V}^{2}
	+ 
	\left\langle G\vec{u},\vec{u}\right\rangle
	=
	0.
\end{equation}
$\left\langle G\vec{u},\vec{u}\right\rangle=0$ by equations \eqref{strongerthanskiba}, \eqref{Calc3}, and \eqref{Skiba-estimate}.
Thus, by equation \eqref{Poincare}, for any $t\geq 0$ the perturbation satisfies
\begin{equation}
	\begin{split}
	& \dfrac{1}{2}\partial_{t}\norm{\vec{u}}_{H}^{2}\leq -\nu\lambda_{1}\norm{\vec{u}}_{H}^{2} \\
	\Rightarrow \
	& \norm{\vec{u(t)}}_{H}^{2} \leq \norm{\vec{u(0)}}_{H}^{2}e^{-2\nu\lambda_{1}t}
	\end{split}
\end{equation}
and the solution is asymptotically attracting in $H$.
\end{proof}

\begin{lemma}
Let $f$ be a force that generates a solution to the Navier-Stokes equations of the form $g(t)\curl \sin(\phi)\vec{n}$. Then there exists $\delta>0$ such that all $g\in L^{\infty}(0,\infty,H)$ such that $\norm{f-g}_{H}<\delta$ generate a unique globally attracting solution.
\end{lemma}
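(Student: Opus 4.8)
The plan is to prove that the exponential stability established in the previous lemma for the exact zonal solution is robust under small perturbations of the force. Let $\vec{f}$ be the force generating the zonal solution $\bar{\vec{u}} = g(t)\curl\sin(\phi)\vec{n}$, and let $\vec{g}$ be a force with $\norm{\vec{f}-\vec{g}}_{H} < \delta$, generating a solution $\vec{w}(t)$. The idea is to track the evolution of the difference $\vec{z} = \vec{w} - \bar{\vec{u}}$ between the perturbed-force solution and the zonal solution, and to show that the special cancellation structure of the zonal solution (Lemma~\ref{trilinearestimateslemma2}) survives, leaving only small inhomogeneous error terms controlled by $\delta$ rather than a blow-up from loss of the favorable estimates.

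\medskip
\noindent\textbf{Step 1: Derive the equation for the deviation from the zonal solution.} First I would subtract the equation for $\bar{\vec{u}}$ (forced by $\vec{f}$) from that for $\vec{w}$ (forced by $\vec{g}$). Writing $\vec{w} = \bar{\vec{u}} + \vec{z}$, the nonlinear term splits via the bilinear decomposition into a part quadratic in $\vec{z}$, a part coupling $\vec{z}$ with $\bar{\vec{u}}$ through the operator $G$ from the previous lemma, and the constant forcing discrepancy $\vec{f} - \vec{g}$. The resulting equation has the schematic form
\begin{equation}
\partial_{t}\vec{z} + \nu A\vec{z} + G\vec{z} + B(\vec{z},\vec{z}) = \vec{f} - \vec{g}. \nonumber
\end{equation}

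\noindent\textbf{Step 2: Energy estimate using the zonal cancellations.} Taking the $H$ inner product with $\vec{z}$, I would invoke exactly the same three identities used in the previous lemma --- equations \eqref{strongerthanskiba}, \eqref{Calc3}, and \eqref{Skiba-estimate} --- to annihilate $\langle G\vec{z},\vec{z}\rangle$, and \eqref{bproperties} to kill $b(\vec{z},\vec{z},\vec{z})$. This yields
\begin{equation}
\tfrac{1}{2}\partial_{t}\norm{\vec{z}}_{H}^{2} + \nu\norm{\vec{z}}_{V}^{2} = \langle \vec{f}-\vec{g},\vec{z}\rangle. \nonumber
\end{equation}
Estimating the right-hand side by Cauchy--Schwarz and Young's inequality as $\langle \vec{f}-\vec{g},\vec{z}\rangle \leq \tfrac{\nu\lambda_1}{2}\norm{\vec{z}}_{H}^{2} + \tfrac{1}{2\nu\lambda_1}\delta^{2}$, then applying Poincar\'e \eqref{Poincare}, I obtain a differential inequality $\partial_{t}\norm{\vec{z}}_{H}^{2} \leq -\nu\lambda_{1}\norm{\vec{z}}_{H}^{2} + \tfrac{1}{\nu\lambda_1}\delta^{2}$. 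Gr\"onwall then gives that $\vec{z}$ enters and remains in a ball of radius $O(\delta/(\nu\lambda_1))$ in $H$, so the perturbed-force solution $\vec{w}$ stays uniformly close to the zonal solution $\bar{\vec{u}}$.

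\medskip
\noindent\textbf{Step 3: Transfer global attraction to the perturbed solution.} Finally I would show global attraction directly for $\vec{w}$. Given any two solutions $\vec{w}_1, \vec{w}_2$ of the $\vec{g}$-forced system, their difference $\vec{d} = \vec{w}_1 - \vec{w}_2$ satisfies a homogeneous equation of the same type, and pairing with $\vec{d}$ produces $\tfrac{1}{2}\partial_t\norm{\vec{d}}_H^2 + \nu\norm{\vec{d}}_V^2 = -b(\vec{d},\vec{w}_2,\vec{d}) - \langle\curl_n\vec{d}\times\bar{\vec{u}}, \vec{d}\rangle - \cdots$, where the zonal terms again vanish and the remaining bad term is bounded by $\eqref{bestimate2}$ as $C\norm{\vec{d}}_H\norm{\vec{d}}_V\norm{\vec{w}_2}_{H^1}$. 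The crux is that $\norm{\vec{w}_2}_{H^1}$ --- or rather the deviation $\norm{\vec{w}_2 - \bar{\vec{u}}}_{H^1}$ that actually appears after isolating the zonal part --- is controlled by $\delta$ via the $H^1$ analogue of Step 2. Choosing $\delta$ small enough that $\tfrac{\nu\lambda_1}{2} - C\norm{\vec{w}_2-\bar{\vec{u}}}_{H^1}^2 > 0$ restores a strictly negative Lyapunov exponent, giving exponential contraction of $\norm{\vec{d}}_H$ and hence a unique globally attracting solution.

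\medskip
The main obstacle is Step 3: the favorable vanishing identities hold only against the exact zonal field $\bar{\vec{u}}$, not against the genuinely nonzonal perturbed solution $\vec{w}_2$, so the error term does not cancel but must instead be \emph{dominated}. The delicate point is that this surviving term is proportional to the $H^1$-size of the deviation from zonal, which forces me to run the energy argument at the $H^1$ level (using the stronger estimates \eqref{bestimate4} and \eqref{strongerthanskiba3}, \eqref{Calc4}) to bound $\norm{\vec{w}_2 - \bar{\vec{u}}}_{H^1}$ by $\delta$, and only then is the explicit smallness threshold $\delta$ determined by the requirement that the dissipation $\nu\lambda_1$ beats the destabilizing constant times $\delta$.
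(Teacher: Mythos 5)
Your proposal follows essentially the same route as the paper's proof: decompose the $\vec{g}$-forced solution as the zonal solution plus a deviation, bound that deviation in $H^{1}$ by $\delta$ via an energy estimate at the $A$-level using the zonal identities \eqref{strongerthanskiba3} and \eqref{Calc4}, and then prove contraction for any two solutions of the $\vec{g}$-forced system by using the cancellations \eqref{strongerthanskiba}, \eqref{Calc3}, \eqref{Skiba-estimate} and dominating the single surviving term by the $H^{1}$-size of the deviation. The only differences are cosmetic: the paper runs the two steps in the opposite order, and it kills the quadratic self-interaction term in the $H^{1}$ estimate outright with the identity \eqref{bproperty} rather than estimating it via \eqref{bestimate4}, which avoids the small bootstrap your version would need.
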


\begin{proof}
The proof will first show that if the solution to the Navier-Stokes equations with a nonzonal force is ``close enough'' to the zonal solution, then it is globally exponentially stable. Standard estimates are then used to express the inequalities in terms of the distance from the force $f$. 

Let $\vec{u}$ be the unique zonal solution of the form $g(t)\curl \sin(\phi)\vec{n}$ for the Navier-Stokes equations with force $\vec{f}$. Suppose $\vec{g}$ is such that there exists $\vec{v}=\vec{u}+\bar{\vec{v}}$ that solves
\begin{equation}
	\partial_{t}\vec{v}+\nu A\vec{v} +B(\vec{v},\vec{v}) + C(\vec{v}) = \vec{g}.  \label{almostzonalnavierstokes}
\end{equation}
Let $\vec{\psi}$ be another solution to \eqref{almostzonalnavierstokes} and consider $\vec{q}= \vec{\psi} - \vec{v}$ which solves
\begin{equation}
	\partial_{t}\vec{q} + \nu A\vec{q} + B(\vec{\psi},\vec{\psi}) - B(\vec{v},\vec{v}) + C(\vec{q}) = 0.
\end{equation}
Rewriting the nonlinear terms gives
\begin{equation}
	\partial_{t}\vec{q} + \nu A\vec{q} + B(\vec{q},\vec{q}) + \curl_{n}\vec{q}\times\vec{u} +\curl_{n}\vec{q}\times\bar{\vec{v}} +\curl_{n}\vec{v}\times\vec{q}  +  C(\vec{q}) = 0.
\end{equation}
Taking the inner product with $\vec{q}$ and using \eqref{bproperties}, \eqref{strongerthanskiba}, \eqref{Calc3} and \eqref{Skiba-estimate} this simplifies to 
\begin{equation}
	\frac{1}{2}\partial_{t}\norm{\vec{q}}_{H}^{2} + \nu \norm{\vec{q}}_{H^{1}}^{2}+\left\langle \curl_{n}\bar{\vec{q}}\times\vec{v} ,\vec{q}\right\rangle = 0.
\end{equation}
Since $\left\langle \curl_{n}\vec{v}\times\vec{u},\vec{w}\right\rangle$ satisfies analogous inequalities to $b(\vec{u},\vec{v},\vec{w})$ including \eqref{bestimate2}
\begin{equation}
	\frac{1}{2}\partial_{t}\norm{\vec{q}}_{H}^{2} + \nu \norm{\vec{q}}_{H^{1}}^{2} \leq  C\norm{\vec{q}}_{H^{1}}^{3/2}\norm{\bar{\vec{v}}}_{H^{1}}\norm{\vec{q}}_{H}^{1/2}.
\end{equation}

Cauchy's Inequality gives that
\begin{equation}
\begin{split}
	& \frac{1}{2}\partial_{t}\norm{\vec{q}}_{H}^{2} + \nu \norm{\vec{q}}_{H^{1}}^{2} \leq  \frac{\nu}{2}\norm{\vec{q}}_{H^{1}}^{2}+C\norm{\bar{\vec{v}}}_{H^{1}}^{4}\norm{\vec{q}}_{H}^{2} \\
\Rightarrow	&	\partial_{t}\norm{\vec{q}}_{H}^{2}
			\leq
			\norm{\vec{q}}_{H}^{2}
			\left(-\lambda_{1}\nu 
			+C\norm{\bar{\vec{v}}}_{H^{1}}^{4}\right) \\
\Rightarrow & \norm{\vec{q}}_{H}^{2}
\leq \norm{\vec{q_{0}}}_{H}^{2}exp\left(-\lambda_{1}\nu t + C\int_{0}^{t}\norm{\overline{\vec{v(\tau)}}}_{H^{1}}^{4}d\tau\right). \label{almostzonalrestriction}	
			\end{split}
\end{equation}
Thus if $\norm{\bar{\vec{v}}}_{H^{1}}$ is small enough, there will be a unique globally attracting solution in $H$.
It remains to express the norms of $\bar{\vec{v}}$ in terms of the difference of forces. Since $\bar{\vec{v}}=\vec{v}-\vec{u}$, consider the difference between the Navier-Stokes equations with force $\vec{f}$ and zonal solution $\vec{u}=g(t)\curl \sin(\phi)\vec{n}$ and equation \eqref{almostzonalnavierstokes} getting 
\begin{equation}
	\partial_{t}\bar{\vec{v}} +\nu A\bar{\vec{v}} - B(\vec{u},\vec{u})+B(\vec{v},\vec{v})+C(\bar{\vec{v}})=\vec{f}-\vec{g}.
\end{equation}
Since 
$- B(\vec{u},\vec{u})+B(\vec{v},\vec{v})= \vec{u}\times \curl_{n}\bar{\vec{v}} + \bar{\vec{v}}\times \curl_{n}\vec{u} + B(\bar{\vec{v}},\bar{\vec{v}})$, the inner product with $A\bar{\vec{v}}$ and equations \eqref{bproperty}, \eqref{strongerthanskiba3}, \eqref{Calc4}, and \eqref{Skiba-estimate} give
\begin{equation}
	\partial_{t}\norm{\bar{\vec{v}}}_{H^{1}}^{2} +\nu \norm{\bar{\vec{v}}}_{H^{2}}^{2}\leq C\norm{\vec{f}-\vec{g}}_{H}^{2}. \label{forwhatscoming}
\end{equation}
Using equation \eqref{Poincare}, equation integrating \eqref{forwhatscoming} from $\left[\frac{1}{2},t\right]$ and using the fact that $\norm{\bar{\vec{v}}(1/2)}_{H^{1}}\leq \norm{\vec{u}(1/2)}_{H^{1}}+\norm{\vec{v}(1/2)}_{H^{1}}$ and \eqref{VtoHcontraction} yields
\begin{equation}
\begin{split}
	\norm{\bar{\vec{v}}(t)}_{H^{1}}^{2} & 
	\leq \norm{\bar{\vec{v}}(1/2)}_{H^{1}}^{2}e^{-\lambda_{1}\nu (t-1/2)} + C\norm{\vec{f}-\vec{g}}_{L^{\infty}(0,\infty;H)}^{2} \\
	&	\leq C(\norm{\vec{v}_{0}}_{H},\norm{\vec{u}_{0}}_{H})e^{-\lambda_{1}\nu t}+C\norm{\vec{f}-\vec{g}}_{L^{\infty}(0,\infty;H)}^{2}.
	\end{split}
\end{equation}
Thus by Cauchy's inequality
\begin{equation}
	\norm{\bar{\vec{v}}(t)}^{4}_{H^{1}}\leq C(\norm{\vec{v}_{0}}_{H},\norm{\vec{u}_{0}}_{H})e^{-2\lambda_{1}\nu t}+C\norm{\vec{f}-\vec{g}}_{L^{\infty}(0,\infty;H)}^{4}.
\end{equation}
Thus the term in the exponential in \eqref{almostzonalrestriction} is bounded above by 
\begin{equation} \nonumber
C(\norm{\vec{v}_{0}}_{H},\norm{\vec{u}_{0}}_{H}) + t\left(-\lambda_{1}\nu+C\norm{\vec{f}-\vec{g}}_{L^{\infty}(0,\infty;H)}^{4}\right). 
\end{equation}
Therefore there is $\delta>0$ such that if $\norm{\vec{f}-\vec{g}}_{L^{\infty}(0,\infty;H)}\leq \delta$ then the unique solution $\vec{v}$ is globally exponentially stable in $H$.
\end{proof}

%
%

\end{document}